\newtheorem{remark}[theorem]{Remark}
\title{Superconvergence points of integer and fractional derivatives of special Hermite interpolations and its applications in solving FDEs
\thanks{This work is supported in part by the National Natural Science Foundation of China under grants NSFC 11471031, NSFC 91430216, 11771035, and NSAF U1530401; the US National Science Foundation through grant DMS-1419040.} }
\author{Beichuan Deng\thanks{Department of Mathematics, Wayne State University, Detroit, MI 48202, USA. (\tt beichuan.deng@wayne.edu).}
\and Jiwei Zhang\thanks{Beijing Computational Science Research Center, Beijing 100193, China ({\tt jwzhang@csrc.ac.cn})}.
\and Zhimin Zhang\thanks{Corresponding author. Beijing Computational Science Research Center, Beijing 100193, China ({\tt zmzhang@csrc.ac.cn}); and Department of Mathematics, Wayne State University,  Detroit, MI 48202, USA, (\tt zzhang@math.wayne.edu).}
}
\begin{document}
\maketitle

\begin{abstract}
In this paper, we study  convergence and superconvergence theory of integer and fractional derivatives of the one-point and the two-point Hermite interpolations. When considering the integer-order derivative, exponential decay of the error is proved, and superconvergence points are located, at which the convergence rates are $O(N^{-2})$ and $O(N^{-1.5})$, respectively, better than the global rate for the one-point and two-point interpolations.  Here $N$ represents the degree of interpolation polynomial.
 It is proved that the $\alpha$-th fractional derivative of $(u-u_N)$ with $k<\alpha<k+1$, is bounded by its $(k+1)$-th derivative. Furthermore, the corresponding superconvergence points are predicted for fractional derivatives, and an eigenvalue method is proposed to calculate the superconvergence points for the Riemann-Liouville fractional derivative. In the application of the knowledge of superconvergence points to solve FDEs, we discover that a modified collocation method makes numerical solutions much more accurate than the traditional collocation method.
\end{abstract}

\begin{keywords}
Superconvergence, Hermite interpolation, Riemann-Liouville fractional derivative, Riesz fractional derivative, generalized Jacobi polynomials, fractional differential equations.
\end{keywords}

\begin{AMS}
65N35, 65M15, 26A33, 41A05, 41A10
%
\end{AMS}

\pagestyle{myheadings}
\thispagestyle{plain}
\markboth{ }{Superconvergence of one/two-point Hermite interpolations}
\section{Introduction}
Nowadays, Lagrange-type interpolation is applied widely in numerical analysis, especially in the spectral Galerkin/collocation methods and numerical quadratures based on (weighted) orthogonal polynomials, because it is stable and converges extremely fast for smooth functions. Recently, many works focus on studying the superconvergence of Lagrange-type interpolations. For example, the superconvergence of integer-order derivative of various Jacobi-Gauss-type spectral interpolations were considered in \cite{Zhang+2008+JSC, Zhang+2012+SINUM,LWang+2014+JSC,Xie+2012+MACOM} respectively. Zhang, Zhao and Deng generalized the study to Riemann-Liouville and Riesz fractional derivatives with the order of $0<\alpha<1$ in \cite{Deng+arxiv, Zhao+2016+SISC}. However, as pointed out in \cite{Deng+arxiv}, Lagrange-type interpolations fail to converge when applying to left Riemann-Liouville fractional derivative with $\alpha>1$. More specifically, the error ${_{-1}D_x^{\alpha}}(u-u_N)$ will not converge at $x=-1$, where $u_N$ is the Legendre-Lobatto or Left-Radau interpolant of $u(x)$. The main reason is that the multiplicity of zero point of $u-u_N$ at $x=-1$ is one.  Similarly, Lagrange-type interpolations also fail at $x=\pm1$ when applying to Riesz fractional derivatives with $\alpha>1$. This motivates us to find an alternative way to remedy the situations where Lagrange-type interpolations fail.

The Hermite interpolation is a natural idea since it can interpolate the function value at interior interpolation points, and interpolate both function value and derivative value(s) at two ends $x=\pm1$. This leads to the zero multiplicity higher at the two ends.  Consequently, the Hermite interpolation makes the singularity at $x=\pm1$ to be under control comparing with Lagrange-type interpolations.  While the theory of convergence and superconvergence for Lagrange-type interpolations is well understood
(see, \cite{Zhang+2008+JSC, Zhang+2012+SINUM,LWang+2014+JSC,Deng+arxiv, Zhao+2016+SISC}), the study on Hermete interpolation has so far received little attention.

The goal of this paper is to provide a fundamental analysis of the convergence and superconvergence for the integer-order and fractional derivatives of three kinds of Hermite interpolants, see the definitions in (\ref{def31}), (\ref{def32}) and (\ref{def33}) below. For the integer-order derivative, it is proved that the error of the Hermite interpolant $u(x)-u_N(x)$ decays exponentially with respect to the degree of interpolation polynomial $N$, when $u(x)$ is analytic on $[-1,1]$. Moreover, for the one-point Hermite interpolation given by (\ref{def31}) or (\ref{def32}), the convergence rate at superconvergence points is $O(N^{-2})$ higher than the optimal global rate; for the two-point Hermite interpolation given by (\ref{def33}), it is $O(N^{-\frac{3}{2}})$ higher. Then, we apply the Hermite interpolation (\ref{def31})/(\ref{def32}) to left/right Riemann-Liouville fractional derivatives, and (\ref{def33}) to Riesz fractional derivative, and a unified error estimate is given by
\begin{eqnarray*}
\Vert {D^{\alpha}}(u-u_{N}) \Vert_{\infty} \leqslant C \Vert (u-u_{N})^{(k+1)}\Vert_{\infty},
\end{eqnarray*}
where $k<\alpha<k+1$, $k$ is a nonnegative integer, and the constant $C$ is independent of $N$.  In other words, the convergence of the interpolation under the fractional derivative is also guaranteed, and the error decays exponentially as well. In order to validate superconvergence phenomenon under the Riemann-Liouville fractional derivatives, an efficient algorithm to calculate superconvergence points is provided, and it is observed that the gain of convergence rate at these points is at least $O(N^{-1})$.

We would like to emphasize that a systematic and rigorous mathematical treatment of
superconvergence points of fractional derivatives can offer some theoretical insight in applications of numerically solving fractional differential equations (FDEs). It may also provide guidance to construct numerical schemes to improve the numerical accuracy.
Note that the fractional derivative of the Hermite interpolant $D^{\alpha}u_N$ approximates $D^{\alpha}u$ more accurately at the superconvergence points. This knowledge enables us to modify the traditional collocation method to solve FDEs: Instead of using the traditional collocation points, the new collocation points are selected as the superconvegence points found in this paper. The existence and uniqueness of numerical solutions can be guaranteed based on the fact that the numbers of interpolation points and superconvergence points are equal to each other for fractional derivatives.
Numerical experiments indicate that the accuracy of the modified collocation method is at least $10^{-2}$ improvement compared with traditional collocation methods.

The paper is organized as follow. Preliminary knowledge is provided in Section 2. Section 3,4,5 are about the theoretical statements of convergence and superconvergence of the Hermite interpolations for taking integer-order derivative, Riemann-Liouville derivative, and Riesz derivative, respectively. Numerical validations and applications are presented in Section 6. Finally, some conclusions are drawn in Section 7.

\section{Preliminaries}
In this section, we begin with some basic definitions and properties, and denote $\mathbb{Z}^+$ and $\mathbb{N}$ by the sets of all positive integers and all nonnegative integers, respectively.

\subsection{Definitions and Properties of Fractional Derivatives}
Let us first recall the definitions and properties of Riemann-Liouville and  Riesz fractional derivatives.

\begin{definition} For $\gamma \in (0,1)$, the left and right Riemann-Liouville fractional integrals are defined respectively by
\begin{eqnarray*}
{_{-1}I_x^{\gamma}}u(x):=\frac{1}{\Gamma(\gamma)} \int_{-1}^x \frac{u(\tau)}{(x-\tau)^{1-\gamma}} d\tau, \ x \in (-1,1], \\
{_{x}I_1^{\gamma}}u(x):=\frac{1}{\Gamma(\gamma)} \int_{x}^1 \frac{u(\tau)}{(\tau-x)^{1-\gamma}} d\tau, \ \ x \in [-1,1).
\end{eqnarray*}
Then for $\alpha \in (k-1,k)$, where $k \in \mathbb{Z}^{+}$, the left and right Riemann-Liouville derivatives are defined respectively by:
\begin{eqnarray}
 _{-1}D_x^{\alpha} u(x) &=& D^k (_{-1}I_x^{k-\alpha}u(x)) \nonumber \\
 &=& \sum_{l=0}^{k-1}\frac{u^{(l)}(-1)}{\Gamma(l+1-\alpha)}(x+1)^{-\alpha+l}+{_{-1}I_x^{k-\alpha}}D^k u(x),  \label{def11} \\
 _{x}D_1^{\alpha} u(x) &=& (-1)^k D^k (_xI_1^{k-\alpha}u(x)) \nonumber \\
&=& \sum_{l=0}^{k-1}(-1)^l\frac{u^{(l)}(-1)}{\Gamma(l+1-\alpha)}(1-x)^{-\alpha+l}+(-1)^k{_xI_1^{k-\alpha}}D^k u(x),
\label{def12}
\end{eqnarray}
where $D^k:=\frac{d^k}{dx^k}$ is the $k$-th  derivative.
\end{definition}

\begin{definition}
Let $\gamma \in (0,1)$, the Riesz potentials in one dimension are defined as follows:
\begin{eqnarray}
&& I^{\gamma}_o u(x):= \frac{c_1}{\Gamma(\gamma)}\int _{-1}^1 \frac{sign(x-\tau) u(\tau)}{|x-\tau|^{1-\gamma}} d\tau = c_1 (_{-1}I_x^{\gamma}-{_x I_1^{\gamma}})u(x), \\
&& I^{\gamma}_e u(x):= \frac{c_2}{\Gamma(\gamma)}\int _{-1}^1 \frac{u(\tau)}{|x-\tau|^{1-\gamma}} d\tau = c_2 (_{-1}I_x^{\gamma}+{_x I_1^{\gamma}})u(x),
\end{eqnarray}
where $sign(\cdot)$ represents the sign function, $c_1=\frac{1}{2 \sin(\pi\gamma/2)}$, $c_2=\frac{1}{2 \cos(\pi\gamma/2)}$. Then for $\alpha \in (k-1,k)$, we can therefore define the Riesz fractional derivative:
\begin{eqnarray}
^R D^{\alpha}u(x):=
\left\{\begin{array}{ll}
D^k I_o^{k-\alpha}u(x)=c_1 (_{-1}D_x^{\alpha} + {_{x}D_1^{\alpha}} )u(x), \ k \ is \ odd,\\
D^k I_e^{k-\alpha}u(x)=c_2 (_{-1}D_x^{\alpha}+ {_{x}D_1^{\alpha}} )u(x), \ k \ is \ even.
\end{array}\right.
\label{def22}
\end{eqnarray}
\end{definition}

\subsection{Generalized Jacobi Polynomials and Their Properties}
We now address the definition of classical Jacobi polynomials and Generalized Jacobi Polynomials (GJP)  with integer indexes. For the convenience of notations, the following four sets are introduced:
\begin{eqnarray*}
& \mathcal{Z}_1=\{ (\alpha,\beta)\in \mathbb{Z}^2: \alpha,\beta<0 \},  \quad
& \mathcal{Z}_2=\{ (\alpha,\beta)\in \mathbb{Z}^2: \alpha<0,\beta \geqslant0 \}, \\
& \mathcal{Z}_3=\{ (\alpha,\beta)\in \mathbb{Z}^2: \alpha \geqslant 0,\beta<0 \},\quad
& \mathcal{Z}_4=\{ (\alpha,\beta)\in \mathbb{Z}^2: \alpha,\beta \geqslant 0 \}. \\
\end{eqnarray*}
\begin{definition}
The classical Jacobi polynomials, denoted by $P^{\alpha,\beta}_n(x)$, are orthogonal with respect to the weight function $\omega^{\alpha,\beta}=(1-x)^{\alpha}(1+x)^{\beta}$ over the interval $I=[-1,1]$, namely,
\begin{eqnarray*}
\int_{-1}^1 P^{\alpha,\beta}_n(x) P^{\alpha,\beta}_m(x) dx=\gamma_n^{\alpha,\beta} \delta_{m,n},
\end{eqnarray*}
where $\delta_{n,m}$ is the Kronecker function, and $\gamma_n^{\alpha,\beta}= \frac{2^{\alpha+\beta+1}\Gamma(n+\alpha+1)\Gamma(n+\beta+1)}{(n+\alpha+\beta+1)\Gamma(n+1)\Gamma(n+\alpha+\beta+1)}$.
\end{definition}
\begin{definition}
Let $\alpha,\beta \in \mathbb{Z}$, the Generalized Jacobi Polynomial $\mathcal{J}^{\alpha,\beta}_N (x)$ is defined as follows:
\begin{eqnarray}
\mathcal{J}^{\alpha,\beta}_N (x)=\left\{\begin{array}{ll}
(1-x)^{-\alpha}(1+x)^{-\beta} P^{-\alpha,-\beta}_{\tilde{n}}(x), & (\alpha,\beta) \in \mathcal{Z}_1, \tilde{n}=N+\alpha+\beta, \\
(1-x)^{-\alpha}P^{-\alpha,\beta}_{\tilde{n}}(x), & (\alpha,\beta) \in \mathcal{Z}_2, \tilde{n}=N+\alpha,  \\
(1+x)^{-\beta}P^{\alpha,-\beta}_{\tilde{n}}(x), & (\alpha,\beta) \in \mathcal{Z}_3, \tilde{n}=N+\beta,\\
P^{\alpha,\beta}_{\tilde{n}}(x), & (\alpha,\beta) \in \mathcal{Z}_4, \tilde{n}=N,
\end{array}\right.
\label{def2.3}
\end{eqnarray}
where $\{ \mathcal{J}^{\alpha,\beta}_N (x)\}$ are only defined for  $N\geqslant -\min\{\alpha,0\}-\min\{\beta,0\}$, and $P^{\alpha,\beta}_n(x)$ is the classical Jacobi polynomial defined above.
\end{definition}

It is easy to check that $\{ \mathcal{J}^{\alpha,\beta}_N (x)\}$ is also a set of weighted orthogonal polynomials. This is, for any $\alpha,\beta \in \mathbb{Z}$, and $N,M\geqslant -\min\{\alpha,0\}-\min\{\beta,0\}$, we have:
\begin{eqnarray}
\int_{-1}^1 \mathcal{J}_N^{\alpha,\beta}(x)\mathcal{J}_M^{\alpha,\beta}(x)\omega^{\alpha,\beta}dx=\eta^{\alpha,\beta}_N \delta_{N,M},
\end{eqnarray}
where $\eta^{\alpha,\beta}_N=\gamma^{|\alpha|,|\beta|}_{\tilde{n}}$,
and $\tilde{n}$ is defined in (\ref{def2.3}). Additionally, when both $\alpha,\beta$ are integers, according to \cite{Guo+Shen+Wang+GJPF, Chen+2014+mathcom,Shen+2015+JMS}, all of the GJPs satisfy the Sturm-Liouville equation:
\begin{eqnarray}
\frac{d}{dx}[(1-x)^{\alpha+1}(1+x)^{\beta+1}\frac{d}{dx}\mathcal{J}^{\alpha,\beta}_N(x)]+\lambda_N^{\alpha,\beta}(1-x)^{\alpha}(1+x)^{\beta}\mathcal{J}_N^{\alpha,\beta}(x)=0,
\label{sturmlv}
\end{eqnarray}
where $\lambda_N^{\alpha,\beta}=N(N+\alpha+\beta+1)$. It is well known that the classical Jacobi polynomials with $\alpha,\beta>-1$ satisfy
\begin{eqnarray}
\frac{d}{dx} P^{\alpha,\beta}_n(x)=\frac{1}{2}(n+\alpha+\beta+1)P^{\alpha+1,\beta+1}_{n-1}(x), \ n\geqslant1.
\end{eqnarray}
As for GJPs, we have similar formulas.
\begin{lemma}[see \cite{Guo+Shen+Wang+GJPF}, Lemma 2.1]
If $(\alpha,\beta) \in \mathcal{Z}_1$, then
\begin{eqnarray}
\frac{d}{dx} \mathcal{J}^{\alpha,\beta}_N(x)=-2(N+\alpha+\beta+1)\mathcal{J}^{\alpha+1,\beta+1}_{N-1}(x);
\label{lemma241}
\end{eqnarray}
if $(\alpha,\beta) \in \mathcal{Z}_2$, then
\begin{eqnarray}
\frac{d}{dx} \mathcal{J}^{\alpha,\beta}_N(x)=-N\mathcal{J}^{\alpha+1,\beta+1}_{N-1}(x);
\label{lemma242}
\end{eqnarray}
if $(\alpha,\beta) \in \mathcal{Z}_3$, then
\begin{eqnarray}
\frac{d}{dx} \mathcal{J}^{\alpha,\beta}_N(x)=N\mathcal{J}^{\alpha+1,\beta+1}_{N-1}(x).
\label{lemma243}
\end{eqnarray}
\label{lemma1}
\end{lemma}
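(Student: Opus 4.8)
The plan is to establish each of the three differentiation formulas (\ref{lemma241})--(\ref{lemma243}) directly from the definition (\ref{def2.3}) of the GJPs and the classical formula $\frac{d}{dx} P^{\alpha,\beta}_n(x)=\frac12(n+\alpha+\beta+1)P^{\alpha+1,\beta+1}_{n-1}(x)$ for $\alpha,\beta>-1$. The key observation is that in each case the derivative of the prefactor $(1-x)^{-\alpha}(1+x)^{-\beta}$ (or the appropriate one-sided version) produces terms that recombine with the derivative of the Jacobi factor via the standard three-term contiguous relations for Jacobi polynomials, so the whole expression collapses to a single GJP with shifted indices.

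First I would treat the case $(\alpha,\beta)\in\mathcal{Z}_1$, i.e. $\alpha,\beta$ both negative integers, where $\mathcal{J}^{\alpha,\beta}_N(x)=(1-x)^{-\alpha}(1+x)^{-\beta}P^{-\alpha,-\beta}_{\tilde n}(x)$ with $\tilde n=N+\alpha+\beta$. Applying the product rule gives three terms: $\alpha(1-x)^{-\alpha-1}(1+x)^{-\beta}P^{-\alpha,-\beta}_{\tilde n}$, $-\beta(1-x)^{-\alpha}(1+x)^{-\beta-1}P^{-\alpha,-\beta}_{\tilde n}$, and $(1-x)^{-\alpha}(1+x)^{-\beta}\cdot\frac{d}{dx}P^{-\alpha,-\beta}_{\tilde n}$. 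Since $-\alpha,-\beta>0>-1$, the last factor equals $\frac12(\tilde n-\alpha-\beta+1)P^{-\alpha+1,-\beta+1}_{\tilde n-1}=\frac12(N+1)P^{-\alpha+1,-\beta+1}_{\tilde n-1}$. Now I factor out $(1-x)^{-\alpha-1}(1+x)^{-\beta-1}$ and am left with showing that $\alpha(1+x)P^{-\alpha,-\beta}_{\tilde n}-\beta(1-x)P^{-\alpha,-\beta}_{\tilde n}+\frac12(N+1)(1-x)(1+x)P^{-\alpha+1,-\beta+1}_{\tilde n-1}$ is a constant multiple of $P^{-\alpha+1,-\beta+1}_{\tilde n+1}$ (noting $\mathcal{J}^{\alpha+1,\beta+1}_{N-1}=(1-x)^{-\alpha-1}(1+x)^{-\beta-1}P^{-\alpha-1,-\beta-1}_{N-1+\alpha+1+\beta+1}=(1-x)^{-\alpha-1}(1+x)^{-\beta-1}P^{-\alpha-1,-\beta-1}_{\tilde n+1}$ — so actually the target Jacobi polynomial has parameters $-\alpha-1,-\beta-1$ and degree $\tilde n+1$). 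This is precisely a known contiguous-relation identity expressing $(1\pm x)P^{a,b}_m$ and $(1-x^2)P^{a+1,b+1}_{m-1}$ in terms of $P^{a-1,b-1}_{m+1}$; I would cite the relevant identity from Szegő or the Shen--Tang--Wang book rather than re-derive it, or alternatively verify it by comparing leading coefficients and values at $x=\pm1$ (both sides being polynomials of degree $\tilde n+1=N-\alpha-\beta-1$, so matching $N-\alpha-\beta$ coefficients plus using that both vanish to the right order at the endpoints suffices).

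For $(\alpha,\beta)\in\mathcal{Z}_2$ (so $\alpha<0\le\beta$, $\mathcal{J}^{\alpha,\beta}_N=(1-x)^{-\alpha}P^{-\alpha,\beta}_{\tilde n}$, $\tilde n=N+\alpha$) and the symmetric case $\mathcal{Z}_3$, the computation is shorter: only one endpoint carries a prefactor, so the product rule yields $\alpha(1-x)^{-\alpha-1}P^{-\alpha,\beta}_{\tilde n}+(1-x)^{-\alpha}\frac12(\tilde n+\beta-\alpha+1)P^{-\alpha+1,\beta+1}_{\tilde n-1}$. Factoring $(1-x)^{-\alpha-1}$ and using the single contiguous relation $\alpha P^{-\alpha,\beta}_{\tilde n}+\frac12(N+\beta+1)(1-x)P^{-\alpha+1,\beta+1}_{\tilde n-1}=cP^{-\alpha-1,\beta+1}_{\tilde n+1}$ with $\tilde n+1=N+\alpha+1$, and then identifying the right side as a constant times $\mathcal{J}^{\alpha+1,\beta+1}_{N-1}=(1-x)^{-\alpha-1}P^{-\alpha-1,\beta+1}_{(N-1)+(\alpha+1)}$, gives (\ref{lemma242}); case $\mathcal{Z}_3$ is obtained by the reflection $x\mapsto -x$, which swaps $\alpha\leftrightarrow\beta$ and introduces the sign change accounting for the $+N$ versus $-N$ in (\ref{lemma243}) versus (\ref{lemma242}). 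The constant $-N$ (resp. $N$) should then drop out after tracking the normalization of the Jacobi polynomials through these relations.

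The main obstacle I anticipate is bookkeeping: getting the exact constants (the factors $-2(N+\alpha+\beta+1)$, $-N$, $N$) right, since these depend delicately on the Jacobi normalization convention and on the degree/parameter shifts, and a sign error in any contiguous relation propagates. The cleanest way to nail the constant in each case is to evaluate both sides of the claimed identity at a convenient point — for instance comparing the coefficient of the highest power of $x$, using that $P^{a,b}_m$ has leading coefficient $2^{-m}\binom{2m+a+b}{m}$ — which determines the constant unambiguously without needing to verify the full polynomial identity by hand; the polynomial identity itself is then guaranteed by the classical theory once one knows that the left side is indeed a polynomial of the right degree with the right endpoint-vanishing behavior, which is immediate from the GJP definition. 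Since the lemma is quoted from \cite{Guo+Shen+Wang+GJPF}, it is also legitimate to simply refer to that source for the detailed verification and present only the outline above.
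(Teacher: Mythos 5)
The paper itself gives no proof of this lemma --- it is quoted verbatim from \cite{Guo+Shen+Wang+GJPF} (Lemma 2.1) --- so there is nothing internal to compare against; your outline is essentially the standard derivation that the cited reference carries out, and in substance it is sound: differentiate the definition (\ref{def2.3}) by the product rule, use $\frac{d}{dx}P^{a,b}_n=\frac12(n+a+b+1)P^{a+1,b+1}_{n-1}$, factor out $(1-x)^{-\alpha-1}(1+x)^{-\beta-1}$ (or the one-sided analogue), and recognize the remaining polynomial combination as a multiple of the Jacobi polynomial with both parameters lowered by one and degree raised by one; indeed the $\mathcal{Z}_1$ case is exactly the classical Rodrigues-type identity $\frac{d}{dx}\bigl[(1-x)^{a}(1+x)^{b}P^{a,b}_n(x)\bigr]=-2(n+1)(1-x)^{a-1}(1+x)^{b-1}P^{a-1,b-1}_{n+1}(x)$ with $a=-\alpha$, $b=-\beta$, $n=\tilde n$, which immediately yields the constant $-2(\tilde n+1)=-2(N+\alpha+\beta+1)$. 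Two small corrections to your bookkeeping: (i) in the $\mathcal{Z}_2$ case the combination $\alpha P^{-\alpha,\beta}_{\tilde n}+\frac12(N+\beta+1)(1-x)P^{-\alpha+1,\beta+1}_{\tilde n-1}$ has degree $\tilde n$, so the right-hand side of your intermediate relation should be $c\,P^{-\alpha-1,\beta+1}_{\tilde n}$, not $P^{-\alpha-1,\beta+1}_{\tilde n+1}$ (this is then consistent with $\mathcal{J}^{\alpha+1,\beta+1}_{N-1}=(1-x)^{-\alpha-1}P^{-\alpha-1,\beta+1}_{N+\alpha}$); and (ii) your suggested shortcut ``both sides vanish to the right order at the endpoints'' does not apply after the singular prefactors have been factored out --- neither side need vanish at $\pm1$ then --- so the proportionality really does rest on the contiguous/Rodrigues identity (or an orthogonality argument via integration by parts against $\omega^{-\alpha-1,-\beta-1}$), with the constant then fixed by leading coefficients as you propose. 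With those repairs, and given that the lemma is a cited result, your outline is an acceptable reconstruction of the proof the paper delegates to \cite{Guo+Shen+Wang+GJPF}.
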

The following theorem states the maximum norm of GJPs.
\begin{theorem}
Let $\mathcal{J}_N^{\alpha,\beta}(x)$ and $\tilde{n}$ be defined in (\ref{def2.3}), where $\alpha,\beta$ are integers. When $\alpha=\beta < 0$, if $\tilde{n}$ is even, then
\begin{eqnarray}
\Vert \mathcal{J}_N^{\alpha,\beta}(x) \Vert_{L^{\infty}[-1,1]}=|\mathcal{J}_N^{\alpha,\beta}(0)|=|P^{-\alpha,-\beta}_{\tilde{n}}(0)|=2^{-\tilde{n}} \binom{\tilde{n}-\alpha}{\tilde{n}/2},
\end{eqnarray}
if $\tilde{n}$ is odd, then
\begin{eqnarray}
\Vert \mathcal{J}_N^{\alpha,\beta}(x) \Vert_{L^{\infty}[-1,1]}=|\mathcal{J}_N^{\alpha,\beta}(\eta_0)| \leqslant 2^{-\tilde{n}} \frac{\Gamma(\tilde{n}-\alpha+1)}{\Gamma(\tilde{n}/2+1)\Gamma(\tilde{n}/2-\alpha+1)},
\end{eqnarray}
where $\eta_0$ is the zero point of $P^{-\alpha-1,-\alpha-1}_{\tilde{n}+1}(x)$ which is closest to 0; when $\alpha \leqslant 0$, $\beta \geqslant 0$, then
\begin{eqnarray}
\Vert \mathcal{J}_N^{\alpha,\beta}(x) \Vert_{L^{\infty}[-1,1]}=|\mathcal{J}_N^{\alpha,\beta}(-1)|=2^{|\alpha|}\binom{\tilde{n}+\beta}{\tilde{n}};
\end{eqnarray}
when $\alpha \geqslant 0$, $\beta \leqslant 0$, then
\begin{eqnarray}
\Vert \mathcal{J}_N^{\alpha,\beta}(x) \Vert_{L^{\infty}[-1,1]}=|\mathcal{J}_N^{\alpha,\beta}(1)|=2^{|\beta|}\binom{\tilde{n}+\alpha}{\tilde{n}},
\end{eqnarray}
where $\binom{n}{m}:=\frac{n!}{m! (n-m)!}$ for integers $n$ and $m$.
\label{thm1}
\end{theorem}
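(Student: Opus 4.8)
The plan is to reduce everything to known extremal properties of classical Jacobi polynomials via the defining formula \eqref{def2.3}, treating the three regimes $\alpha=\beta<0$, $(\alpha\le 0,\beta\ge 0)$, and $(\alpha\ge 0,\beta\le 0)$ separately. In all cases the GJP is a classical Jacobi polynomial multiplied by a power of $(1-x)$ and/or $(1+x)$, so the task is to understand where the product attains its maximum modulus on $[-1,1]$.

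First, for the case $\alpha\le 0$, $\beta\ge 0$ (the case $\alpha\ge 0,\beta\le 0$ being symmetric under $x\mapsto -x$), we have $\mathcal{J}_N^{\alpha,\beta}(x)=(1-x)^{-\alpha}P_{\tilde n}^{-\alpha,\beta}(x)$ with $-\alpha=|\alpha|$ a nonnegative integer and $\tilde n=N+\alpha$. I would argue that the weight $(1-x)^{|\alpha|}$ pushes the maximum toward $x=-1$: precisely, I expect to show $|\mathcal{J}_N^{\alpha,\beta}(x)|$ is monotone enough (or that one can bound $(1-x)^{|\alpha|}|P_{\tilde n}^{|\alpha|,\beta}(x)|$ termwise) so that the sup is attained at $x=-1$, where $(1-x)^{|\alpha|}=2^{|\alpha|}$ and $|P_{\tilde n}^{|\alpha|,\beta}(-1)|=\binom{\tilde n+\beta}{\tilde n}$ by the standard endpoint value $P_n^{a,b}(-1)=(-1)^n\binom{n+b}{n}$. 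This yields $\|\mathcal{J}_N^{\alpha,\beta}\|_\infty=2^{|\alpha|}\binom{\tilde n+\beta}{\tilde n}$.

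Next, for the symmetric case $\alpha=\beta<0$, write $\mathcal{J}_N^{\alpha,\beta}(x)=(1-x^2)^{-\alpha}P_{\tilde n}^{-\alpha,-\alpha}(x)$, a Gegenbauer-type polynomial of degree $\tilde n$ on $[-1,1]$, even or odd according to the parity of $\tilde n$. When $\tilde n$ is even the function is even, and I would use the fact that the relevant ultraspherical-type polynomial attains its maximum modulus at the center $x=0$; there $(1-x^2)^{-\alpha}=1$, so $\|\mathcal{J}_N^{\alpha,\beta}\|_\infty=|P_{\tilde n}^{-\alpha,-\alpha}(0)|$, which is evaluated by the classical formula for Jacobi polynomials at the origin and simplifies to $2^{-\tilde n}\binom{\tilde n-\alpha}{\tilde n/2}$. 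When $\tilde n$ is odd the function vanishes at $0$ and is odd, so the maximum is attained at the critical point nearest $0$; differentiating and using $\frac{d}{dx}\mathcal{J}_N^{\alpha,\beta}\propto \mathcal{J}_{N-1}^{\alpha+1,\beta+1}$ from Lemma~\ref{lemma1} (equation \eqref{lemma241}) identifies that critical point as the zero $\eta_0$ of $P_{\tilde n+1}^{-\alpha-1,-\alpha-1}$ closest to $0$, and the stated bound follows by plugging $|x|\le 1$ into $(1-x^2)^{-\alpha}$ and estimating $|P_{\tilde n}^{-\alpha,-\alpha}(\eta_0)|$ by its value-at-a-nearby-point / leading-coefficient bound.

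The main obstacle is the monotonicity/location argument in the odd-$\tilde n$ symmetric case and in the mixed-sign cases: one must rule out that the competition between the growing boundary weight $(1\mp x)^{|\cdot|}$ and the oscillation of the Jacobi polynomial produces a larger interior extremum. I expect to handle this by examining $\frac{d}{dx}\mathcal{J}_N^{\alpha,\beta}$ through the derivative formulas of Lemma~\ref{lemma1}, which express it as a GJP of the same sign-type, so that the critical points are exactly the zeros of an explicit Jacobi polynomial; combining this with the known interlacing/sign pattern of those zeros pins down which critical point carries the largest value, and a direct comparison with the boundary value (using $P_n^{a,b}(\pm 1)$ and the fact that the weight is largest at the opposite endpoint) closes the argument. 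The evaluations $P_n^{a,b}(\pm1)$ and $P_n^{a,a}(0)$ are then routine substitutions into standard Jacobi formulas.
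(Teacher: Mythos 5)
Your reduction is sound as far as it goes: by Lemma \ref{lemma1} the critical points of $\mathcal{J}_N^{\alpha,\beta}$ are zeros of an explicit Jacobi-type polynomial, and the endpoint and center values follow from the standard evaluations of $P_n^{a,b}(\pm1)$ and $P_n^{a,a}(0)$. But the heart of the theorem --- deciding which of the many local extrema of the weighted polynomial $(1\mp x)^{|\cdot|}P_{\tilde n}^{\cdot,\cdot}(x)$ is the largest --- is never actually argued. In the mixed-sign case you say the weight ``pushes the maximum toward $x=-1$'' and hope for monotonicity or a termwise bound; $|P_{\tilde n}^{|\alpha|,\beta}(x)|$ oscillates, so the product is not monotone, and no concrete bound is offered. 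In the symmetric even case you invoke ``the fact'' that the relevant ultraspherical-type polynomial peaks at the center; for the unweighted $P_{\tilde n}^{-\alpha,-\alpha}$ with $-\alpha\geqslant 0$ the maximum modulus is at the endpoints, and for the weighted object $(1-x^2)^{-\alpha}P_{\tilde n}^{-\alpha,-\alpha}$ the centered-maximum statement is precisely what must be proved, not a citable fact (the classical Sonin--Szeg\H{o} results concern a different weight exponent). Finally, interlacing and sign patterns of the zeros of the derivative locate the critical points but say nothing about the relative magnitudes of the successive critical values, so they cannot ``pin down which critical point carries the largest value,'' nor do they yield the quantitative bound at $\eta_0$ in the odd case.

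The missing device, which the paper supplies, is the Sonin-type auxiliary function $f(x)=\lambda_N^{\alpha,\beta}[\mathcal{J}_N^{\alpha,\beta}(x)]^2+(1-x^2)[\frac{d}{dx}\mathcal{J}_N^{\alpha,\beta}(x)]^2$. At every point of $A=\{-1,1\}\cup\{\text{critical points}\}$ one has $f=\lambda_N^{\alpha,\beta}[\mathcal{J}_N^{\alpha,\beta}]^2$, and the Sturm--Liouville equation (\ref{sturmlv}) gives $f'(x)=2[\frac{d}{dx}\mathcal{J}_N^{\alpha,\beta}(x)]^2\,[(\alpha+\beta+1)x-\beta+\alpha]$, so $f$ is monotone except possibly across the single point $x^*=(\beta-\alpha)/(\alpha+\beta+1)$. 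This one computation orders all the candidate extremal values simultaneously: it forces the maximum to $x=-1$ (resp.\ $x=1$) in the mixed cases, to $x=0$ when $\alpha=\beta<0$ and $\tilde n$ is even, and to the critical point $\eta_0$ nearest $0$, with the stated Gamma-function bound, when $\tilde n$ is odd. Without this monotone comparison function, or an equivalent mechanism for comparing successive extrema, your outline does not close; supplying it would essentially reproduce the paper's proof.
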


\begin{proof}
According to Lemma \ref{lemma1}, the maximal value of $\mathcal{J}_N^{\alpha,\beta}(x)$ must be located at one of points in $A=\{-1,1 \} \cup \{ \hbox{zeros of} \ P^{-\alpha-1,-\alpha-1}_{\tilde{n}+1}(x) \}$. Therefore, define
\begin{eqnarray}
f(x)=\lambda_N^{\alpha,\beta} [\mathcal{J}_N^{\alpha,\beta}(x)]^2+(1-x^2)[\frac{d}{dx} \mathcal{J}_N^{\alpha,\beta}(x)]^2, \ x \in [-1,1],
\end{eqnarray}
we can see that $\forall \eta \in A$,
\begin{eqnarray*}
f(\eta)=\lambda_N^{\alpha,\beta} [\mathcal{J}_N^{\alpha,\beta}(\eta)]^2,
\end{eqnarray*}
so the question turns out to find $$\max_{\eta \in A} \{ f(\eta) \}.$$
By simplifying (\ref{sturmlv}), we have:
\begin{eqnarray}
(\alpha+\beta+2)x-\beta+\alpha=(1-x^2)\frac{d^2}{dx^2}\mathcal{J}_N^{\alpha,\beta}(x)+\lambda_N^{\alpha,\beta} \mathcal{J}_N^{\alpha,\beta}(x).
\label{thm11}
\end{eqnarray}
By inputting (\ref{thm11}), we arrive at
\begin{eqnarray}
f'(x) &=& 2\frac{d}{dx}[\lambda_N^{\alpha,\beta} \mathcal{J}_N^{\alpha,\beta}(x) + (1-x^2)\frac{d^2}{dx^2}\mathcal{J}_N^{\alpha,\beta}(x)-x \cdot \frac{d}{dx}\mathcal{J}_N^{\alpha,\beta}(x)] \nonumber \\
&=& 2 [\frac{d}{dx} \mathcal{J}_N^{\alpha,\beta}(x)]^2 \cdot [(\alpha+\beta+1)x-\beta+\alpha].
\end{eqnarray}
Hence $x^*=\frac{\beta-\alpha}{\alpha+\beta+1}$ is the only point that may change the sign of $f'(x)$. Then
\begin{enumerate}
\item $|x^*| < 1 \ \Leftrightarrow \ (\alpha+\frac{1}{2})(\beta+\frac{1}{2})>0$,
\item $|x^*| \geqslant 1 \  \Leftrightarrow \ (\alpha+\frac{1}{2})(\beta+\frac{1}{2}) \leqslant 0$.
\end{enumerate}
Therefore, when $\alpha < 0$, $\beta \geqslant 0$, $f'(x)<0$ on $[-1,1]$, we have
\begin{eqnarray*}
\Vert \mathcal{J}_N^{\alpha,\beta}(x) \Vert_{L^{\infty}[-1,1]}=|\mathcal{J}_N^{\alpha,\beta}(-1)|=2^{|\alpha|}\binom{\tilde{n}+\beta}{\tilde{n}}.
\end{eqnarray*}
When $\alpha \geqslant 0$, $\beta < 0$, $f'(x)>0$ on $[-1,1]$, we have
\begin{eqnarray*}
\Vert \mathcal{J}_N^{\alpha,\beta}(x) \Vert_{L^{\infty}[-1,1]}=|\mathcal{J}_N^{\alpha,\beta}(1)|=2^{|\beta|}\binom{\tilde{n}+\alpha}{\tilde{n}}.
\end{eqnarray*}
When $\alpha=\beta<0$, if $\tilde{n}$ is even, then $x^*=0 \in A$ since $P^{-\alpha-1,-\alpha-1}_{\tilde{n}+1}(x)$ is an odd function, we have
\begin{eqnarray}
\Vert \mathcal{J}_N^{\alpha,\beta}(x) \Vert_{L^{\infty}[-1,1]}=|\mathcal{J}_N^{\alpha,\beta}(0)| &=& |P^{-\alpha,-\beta}_{\tilde{n}}(0)|=2^{-\tilde{n}} \binom{\tilde{n}-\alpha}{\tilde{n}/2}
\label{p0} \\
&=& 2^{-\tilde{n}} \frac{\Gamma(\tilde{n}-\alpha+1)}{\Gamma(\tilde{n}/2+1)\Gamma(\tilde{n}/2-\alpha+1)}, \nonumber
\end{eqnarray}
where (\ref{p0}) is derived from the three-recurrence formula of Jacobi polynomials. If $\tilde{n}$ is odd, since $x^*=0 \notin A$, and $\mathcal{J}_N^{\alpha,\beta}(x)$ is an odd function, the absolute maximal value is obtained at $\pm \eta_0$, the zero point closest to 0, and we have:
\begin{eqnarray*}
\Vert \mathcal{J}_N^{\alpha,\beta}(x) \Vert_{L^{\infty}[-1,1]}=|\mathcal{J}_N^{\alpha,\beta}(\eta_0)| \leqslant 2^{-\tilde{n}} \frac{\Gamma(\tilde{n}-\alpha+1)}{\Gamma(\tilde{n}/2+1)\Gamma(\tilde{n}/2-\alpha+1)}.
\end{eqnarray*}
\end{proof}

\subsection{Interpolation of Analytic Functions}
In this work, we always assume $u(x)$ is analytic on $[-1,1]$, and can be analytically extended to a certain domain on the complex-plane. The fundamental error analysis of Hermite interpolation was already provided in \cite{Interp+approx}.
\begin{lemma}[see \cite{Interp+approx}, Theorem 3.5.1]
Let $x_0, x_1, \cdots, x_n$ be $n+1$ distinct points in $[a,b]$. Let $m_0,m_1,\cdots, m_n$ be $n+1$ nonnegative integers. Let $N=(m_0+\cdots+m_n)+n$. Designate by $u_N$ the unique element of $\mathcal{P}_N$ for which
\begin{eqnarray}
u^{(k)}_N(x_i)=u^{(k)}(x_i), \ \ k=0,1,\cdots, m_i, \ \ i=0,1,\cdots,n,
\label{lemma2311}
\end{eqnarray}
where $u(x) \in C^N[a,b]$ and suppose that $u^{(N+1)}(x)$ exists in $(a,b)$. Then
\begin{eqnarray}
u(x)-u_N(x):=R_N(u;x)=\frac{u^{(N+1)}(\xi)}{(N+1)!}\omega_N(x),
\label{lemma2312}
\end{eqnarray}
where $\min(x,x_0,\cdots, x_m)<\xi<\max(x,x_0,\cdots,x_n)$, and
\begin{eqnarray}
\omega_N(x)=(x-x_0)^{m_0+1}(x-x_1)^{m_1+1}\cdots (x-x_n)^{m_n+1}.
\label{lemma2313}
\end{eqnarray}
\end{lemma}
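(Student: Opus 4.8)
The plan is to prove this by the classical auxiliary-function device together with a repeated application of Rolle's theorem, carefully tracking multiplicities. First I would settle existence and uniqueness of $u_N$: the conditions (\ref{lemma2311}) impose $\sum_{i=0}^n(m_i+1)=(m_0+\cdots+m_n)+(n+1)=N+1$ linear constraints on the $N+1$ coefficients of a polynomial in $\mathcal{P}_N$, so it suffices to show the homogeneous problem (same conditions with $u\equiv0$) has only the trivial solution. If $p\in\mathcal{P}_N$ satisfies $p^{(k)}(x_i)=0$ for $k=0,\dots,m_i$ and all $i$, then $p$ has a zero of multiplicity at least $m_i+1$ at each $x_i$, hence $\omega_N\mid p$; since $\deg\omega_N=N+1>\deg p$, this forces $p\equiv0$. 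Thus the coefficient-to-data map is an injection between spaces of equal dimension $N+1$, hence a bijection, yielding the unique $u_N$.

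Next I fix $x\in[a,b]$ and prove (\ref{lemma2312}). If $x$ coincides with some $x_i$, then $\omega_N(x)=0$ and $R_N(u;x)=u(x_i)-u_N(x_i)=0$, so the identity holds with any admissible $\xi$; assume henceforth $x\neq x_i$ for all $i$, so $\omega_N(x)\neq0$, and introduce
\[
g(t)=R_N(u;t)-\frac{R_N(u;x)}{\omega_N(x)}\,\omega_N(t),\qquad t\in[a,b].
\]
Since $u\in C^N[a,b]$ with $u^{(N+1)}$ existing on $(a,b)$, and $u_N,\omega_N$ are polynomials, $g$ has exactly this regularity. By (\ref{lemma2311}) the residual $R_N(u;\cdot)$ vanishes to order at least $m_i+1$ at each $x_i$, and by (\ref{lemma2313}) so does $\omega_N$ (to order exactly $m_i+1$, as the other factors are nonzero there); hence $g$ has a zero of multiplicity at least $m_i+1$ at each $x_i$. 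Together with $g(x)=0$ by construction, and $x$ distinct from all $x_i$, the function $g$ has, counted with multiplicity, at least $\sum_{i=0}^n(m_i+1)+1=N+2$ zeros in the interval spanned by $x,x_0,\dots,x_n$.

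Finally I would run the generalized Rolle's theorem: between two consecutive distinct zeros of $g$ the derivative $g'$ has a zero, while a zero of $g$ of multiplicity $m\geqslant2$ is a zero of $g'$ of multiplicity $m-1$; combining these, $g'$ has at least $N+1$ zeros counted with multiplicity in the same interval. Iterating $N+1$ times — which is exactly what the stated regularity of $g$ permits — produces a point $\xi$ with $g^{(N+1)}(\xi)=0$ and $\min(x,x_0,\dots,x_n)<\xi<\max(x,x_0,\dots,x_n)$. Since $u_N\in\mathcal{P}_N$ gives $u_N^{(N+1)}\equiv0$ and $\omega_N$ is monic of degree $N+1$ gives $\omega_N^{(N+1)}\equiv(N+1)!$, evaluating $g^{(N+1)}$ at $\xi$ yields
\[
0=u^{(N+1)}(\xi)-\frac{R_N(u;x)}{\omega_N(x)}\,(N+1)!,
\]
which rearranges to (\ref{lemma2312}). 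The one point that needs care, and is really the heart of the argument, is the bookkeeping in the repeated Rolle step: one must verify that the multiple zeros of $g$ are correctly accounted for and that, at each differentiation, the new interior zero produced by Rolle between two distinct consecutive zeros is genuinely distinct from the surviving multiple zeros, so that the total count drops by exactly one per step and the iteration can be carried out $N+1$ times.
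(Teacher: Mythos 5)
Your proof is correct: the paper itself offers no proof of this lemma (it is quoted from Davis, Theorem 3.5.1 in \cite{Interp+approx}), and your argument is exactly the classical one used there — uniqueness via divisibility by $\omega_N$, the auxiliary function $g(t)=R_N(u;t)-\frac{R_N(u;x)}{\omega_N(x)}\omega_N(t)$, and repeated (generalized) Rolle applied to its $N+2$ zeros counted with multiplicity. The multiplicity bookkeeping you flag is handled as you describe, since each original multiplicity is at most $N+1$ and so is exhausted after $N+1$ differentiations, forcing the surviving zero $\xi$ to be an interior Rolle point with $\min(x,x_0,\dots,x_n)<\xi<\max(x,x_0,\dots,x_n)$.
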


\begin{lemma}[see \cite{Interp+approx}, Corollary 3.6.3]
Let $u(z)$ be analytic in a closed simply connected region $R$, $\mathcal{E}$ be a simple, closed, rectifiable curve that lies in $R$ and contains the distinct points $x_0, x_1, \cdots, x_n$ in tis interior. Then
\begin{eqnarray}
u_N(x)=\frac{1}{2\pi i} \oint_{\mathcal{E} } \frac{\omega_N(z)-\omega_N(x)}{\omega_N(z)(z-x)}u(z)dz
\end{eqnarray}
and
\begin{eqnarray}
R_N(u;x)=\frac{1}{2 \pi i} \oint_{\mathcal{E}} \frac{\omega_N(x)u(z)}{(z-x)\omega_N(z)}dz,
\label{lemma2332}
\end{eqnarray}
where $u_N(x)$, $R_N(u;x)$, $\omega_N(x)$ are defined in (\ref{lemma2311}), (\ref{lemma2312}) and (\ref{lemma2313}), respectively.
\end{lemma}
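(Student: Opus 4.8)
The plan is to show that the contour integral on the right of the first identity defines a polynomial in $x$ of degree at most $N$ obeying all the Hermite conditions (\ref{lemma2311}), and then to invoke the uniqueness of the Hermite interpolant (Theorem 3.5.1 of \cite{Interp+approx}). Once the first identity is established, the second is immediate: adding the two candidate expressions and using $(\omega_N(z)-\omega_N(x))+\omega_N(x)=\omega_N(z)$ collapses the integrand to $u(z)/(z-x)$, whose contour integral is $2\pi i\,u(x)$ by Cauchy's integral formula --- legitimate because $u$ is analytic on and inside $\mathcal{E}$ and the evaluation point lies in the interior of $\mathcal{E}$ --- so the stated integral equals $u(x)-u_N(x)=R_N(u;x)$. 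Fix then a point $x$ interior to $\mathcal{E}$ with $x\neq x_i$ for every $i$; since the nodes $x_i$ are strictly inside $\mathcal{E}$, the function $\omega_N$ has no zero on $\mathcal{E}$ and every integrand below is analytic in $z$ near $\mathcal{E}$.

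\emph{Degree count.} From (\ref{lemma2313}), $\deg\omega_N=\sum_{i=0}^n(m_i+1)=N+1$. For each fixed $z$ the difference quotient $(\omega_N(z)-\omega_N(x))/(z-x)$ is a polynomial in $x$ of degree $N$ (apply $\frac{x^j-z^j}{x-z}=\sum_{l=0}^{j-1}x^lz^{j-1-l}$ monomial by monomial). Hence
\[
Q(x):=\frac{1}{2\pi i}\oint_{\mathcal{E}}\frac{\omega_N(z)-\omega_N(x)}{\omega_N(z)(z-x)}\,u(z)\,dz
\]
is the integral over the fixed curve $\mathcal{E}$ of a polynomial of degree $\le N$ in $x$ whose coefficients depend on $z$ only through $u(z)/\omega_N(z)$; therefore $Q\in\mathcal{P}_N$.

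\emph{Interpolation conditions.} Using Cauchy's formula as above one gets $u(x)-Q(x)=\omega_N(x)\,g(x)$, where
\[
g(x):=\frac{1}{2\pi i}\oint_{\mathcal{E}}\frac{u(z)}{\omega_N(z)(z-x)}\,dz .
\]
If $x$ varies in a small disc around a node $x_i$ it stays interior to $\mathcal{E}$, so the integrand defining $g$ has no pole on $\mathcal{E}$ and $g$ is analytic near $x_i$. Since $\omega_N$ vanishes to order $m_i+1$ at $x_i$, the product $u-Q=\omega_N g$ vanishes to order at least $m_i+1$ at $x_i$, i.e. $(u-Q)^{(k)}(x_i)=0$ for $k=0,1,\dots,m_i$ and all $i$. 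Thus $Q\in\mathcal{P}_N$ satisfies (\ref{lemma2311}), so $Q=u_N$ by uniqueness, which is the first identity; and $R_N(u;x)=u(x)-u_N(x)=\omega_N(x)g(x)$ is the second.

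\emph{Where the care lies.} Beyond these rearrangements, the only point demanding attention is the repeated use of the hypothesis that $x_0,\dots,x_n$ (and, in the last step, a whole neighborhood of each $x_i$) lie in the \emph{interior} of $\mathcal{E}$: this is exactly what guarantees that no pole of the integrand ever crosses $\mathcal{E}$ and that Cauchy's formula applies uniformly in $x$. A variant that sidesteps the uniqueness appeal is to evaluate $g$ by the residue theorem as a sum of residues at the $x_i$, thereby exhibiting $u_N$ in Newton (divided-difference) form directly; the argument above is simply shorter.
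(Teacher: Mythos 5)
The paper does not prove this lemma at all: it is imported verbatim from Davis \cite{Interp+approx} (Corollary 3.6.3), so there is no in-paper proof to compare against. Your argument is a correct, self-contained proof of the cited result, and it is essentially the standard one: the difference quotient $(\omega_N(z)-\omega_N(x))/(z-x)$ makes $Q$ a polynomial of degree at most $N$ (note $\deg\omega_N=N+1$, consistent with the count $N=\sum_i m_i+n$); Cauchy's formula gives $u-Q=\omega_N\,g$ with $g$ analytic in the interior of $\mathcal{E}$ (the nodes lie strictly inside, so $\omega_N$ has no zero on $\mathcal{E}$ and differentiation under the integral is legitimate); hence $u-Q$ vanishes to order $m_i+1$ at each $x_i$, the Hermite conditions (\ref{lemma2311}) hold, and uniqueness from the preceding lemma identifies $Q=u_N$, after which (\ref{lemma2332}) is just $R_N=u-u_N=\omega_N g$. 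Two cosmetic points: the initial restriction $x\neq x_i$ is never needed (the integrand is singular only for $z$ on $\mathcal{E}$, not at $x=x_i$) and you silently drop it when you let $x$ range over a disc about a node, which is fine; and the phrase that the coefficients of the degree-$N$ polynomial in $x$ depend on $z$ ``only through $u(z)/\omega_N(z)$'' is loose (they also carry powers of $z$ from the difference quotient), but this does not affect the conclusion $Q\in\mathcal{P}_N$. The residue-theorem variant you sketch at the end, which expands $g$ over the poles at the $x_i$ and exhibits the interpolant explicitly, is in fact closer to Davis's original derivation and avoids the appeal to uniqueness; your shorter route buys economy at the price of quoting the uniqueness statement from the previous lemma.
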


Without loss of generality, we usually consider functions that are analytic on the reference interval $[-1,1]$. It is known that each such function can be analytically extended to a domain enclosed by $Berstein \ ellipse$ $\mathcal{E}_{\rho}$ with the foci $\pm1$, namely,
\begin{eqnarray}
\mathcal{E}_{\rho}:=\{z: z=\frac{1}{2}(\rho e^{i\theta}+\rho^{-1} e^{-i \theta}), \ 0 \leqslant \theta < 2\pi \}, \ \rho > 1,
\label{bersteindef}
\end{eqnarray}
where $i=\sqrt{-1}$ is the imaginary unit, $\rho$ is the sum of semimajor and semiminor axes. Then we have the following bounds for $\mathcal{L}(\mathcal{E}_{\rho})$, the perimeter of the ellipse, and $\mathcal{D}_{\rho}$, the shortest distance from $\mathcal{E}_{\rho}$ to $[-1,1]$ respectively:
\begin{eqnarray}
\mathcal{L}(\mathcal{E}_{\rho}) \leqslant \pi (\rho+\rho^{-1})^{\frac{1}{2}}, \ \hbox{and} \ \mathcal{D}_{\rho}=\frac{1}{2}(\rho+\rho^{-1})-1.
\end{eqnarray}
For convenience, we define:
\begin{eqnarray}
M^{\rho}_u=\sup_{z \in \mathcal{E}_{\rho}} |u(z)|.
\end{eqnarray}

\section{Hermite Interpolation for Integer-order Derivatives}

In this work, we consider the three kinds of Hermite interpolations as follows.
\begin{enumerate}
\item Let $-1=x_0<x_1<\cdots<x_{\tilde{n}}<1$ be $\tilde{n}+1$ zeros of $\mathcal{J}^{0,-(k+1)}_{N+1}(x)$ (without considering multiplicity in all cases), where $k$ is a positive integer, $\tilde{n}=N-k$. We are going to find $u_{NL}(x) \in \mathcal{P}_N([-1,1])$, such that
\begin{eqnarray}
&& u^{(j)}_{NL}(x_0)=u^{(j)}(x_0), \ j=0,\cdots, k, \ \hbox{and} \nonumber\\
&& u_{NL}(x_i)=u(x_i), \ i=1,\cdots, \tilde{n}.
\label{def31}
\end{eqnarray}
\item Let $-1<x_0<x_1<\cdots<x_{\tilde{n}}=1$ be $\tilde{n}+1$ zeros of $\mathcal{J}^{-(k+1),0}_{N+1}(x)$, where $k$ is a positive integer, $\tilde{n}=N-k$. We are going to find $u_{NR}(x) \in \mathcal{P}_N([-1,1])$, such that
\begin{eqnarray}
&& u^{(j)}_{NR}(x_{\tilde{n}})=u^{(j)}(x_{\tilde{n}}), \ j=0,\cdots, k, \ \hbox{and} \nonumber\\
&& u_{NR}(x_i)=u(x_i), \ i=0,\cdots, \tilde{n}-1.
\label{def32}
\end{eqnarray}
\item Let $-1=x_0<x_1<\cdots<x_{\tilde{n}+1}=1$ be $\tilde{n}+2$ zeros of $\mathcal{J}^{-(k+1),-(k+1)}_{N+1}(x)$, where $k$ is a positive integer, $\tilde{n}=N-2k-1$. We are going to find $u_{NB}(x) \in \mathcal{P}_N([-1,1])$, such that
\begin{eqnarray}
&& u^{(j)}_{NB}(x_s)=u^{(j)}(x_s), \ j=0,1,\cdots, k, \ s=0,\tilde{n}+1,  \ \hbox{and} \nonumber\\
&& u_{NB}(x_i)=u(x_i), \ i=1,2,\cdots, \tilde{n}.
\label{def33}
\end{eqnarray}
\end{enumerate}
Clearly, if (\ref{def31}) is applied, then $x=-1$ will be a zero point of $(u-u_N)$ of multiplicity $k+1$; if (\ref{def32}) is applied, then $x=1$ will be a zero point of multiplicity $k+1$; if (\ref{def33}) is applied, then both $x=\pm1$ are zero points of multiplicity $k+1$.

Let us first discuss the interpolation errors. According to (\ref{lemma2312}) and Theorem \ref{thm1}, we  immediately derive the following corollary.

\begin{corollary}
Let $u_{NL}$ and $u_{NR}$ be the interpolants defined in (\ref{def31}) and (\ref{def32}) respectively, and $-(\min\{\alpha,0\}+\min\{\beta,0\}) \leqslant N$. Then
\begin{eqnarray}
|R_N(u;x)| \leqslant \frac{2^{N+1}\Gamma(N-k+1)}{\Gamma(2N-k+2)} \cdot \Vert u^{(N+1)}(x) \Vert_{C[-1,1]}.
\end{eqnarray}
Let $u_{NB}$ be the interpolant defined in (\ref{def33}). Then
\begin{eqnarray}
|R_N(u;x)| \leqslant  \frac{\Gamma(N-2k)\Gamma(N-k+1) \cdot \Vert u^{(N+1)}(x) \Vert_{C[-1,1]}}{\Gamma(2N-2k+1)\Gamma((N+1)/2-k)\Gamma((N+1)/2+1)}.
\end{eqnarray}
\end{corollary}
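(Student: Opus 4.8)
The plan is to combine the pointwise error representation \eqref{lemma2312} with the maximum-norm bounds for GJPs from Theorem \ref{thm1}, specializing to the three choices of interpolation nodes. In each of the three cases the nodal polynomial $\omega_N(x)$ is a constant multiple of a power of $(1-x)$ and/or $(1+x)$ times a classical Jacobi polynomial, i.e.\ a constant multiple of a GJP $\mathcal{J}^{\alpha,\beta}_{N+1}(x)$ with suitable negative integer indices, so that $\|\omega_N\|_{C[-1,1]}$ can be read off directly from the appropriate branch of Theorem \ref{thm1}.

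First I would treat the one-point cases \eqref{def31}--\eqref{def32}. For \eqref{def31}, the nodes $x_0=-1$ (with multiplicity $k+1$) and $x_1,\dots,x_{\tilde n}$ (the zeros of $\mathcal{J}^{0,-(k+1)}_{N+1}$, each simple) give, by \eqref{lemma2313},
\begin{eqnarray*}
\omega_N(x) = (x+1)^{k+1}\prod_{i=1}^{\tilde n}(x-x_i) = c\,(1+x)^{k+1}P^{0,-(k+1)}_{\,\tilde n+(k+1)}(x)/\text{(leading coeff.)},
\end{eqnarray*}
which up to the leading-coefficient normalization is exactly $\mathcal{J}^{0,-(k+1)}_{N+1}(x)$ in branch $\mathcal{Z}_3$ (here $\alpha=0\ge0$, $\beta=-(k+1)<0$). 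The leading coefficient of $\mathcal{J}^{0,-(k+1)}_{N+1}$ equals that of $P^{0,k+1}_{\tilde n}$, namely $2^{-(N+1)}\binom{2(N+1)+\dots}{\dots}$ type expression; dividing the $L^\infty$ bound $2^{k+1}\binom{\tilde n+\alpha}{\tilde n}$ from Theorem \ref{thm1} by this leading coefficient, and simplifying the resulting ratio of Gamma functions, yields $\|\omega_N\|_{C[-1,1]} = 2^{N+1}\Gamma(N-k+1)\Gamma(N+2)/\Gamma(2N-k+2)$, and then \eqref{lemma2312} together with $(N+1)!=\Gamma(N+2)$ gives the stated bound. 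The case \eqref{def32} is identical by the symmetry $x\mapsto -x$, interchanging the roles of $\alpha$ and $\beta$. For the two-point case \eqref{def33}, $\omega_N(x)=(1+x)^{k+1}(1-x)^{k+1}\prod_{i=1}^{\tilde n}(x-x_i)$ is, up to the leading coefficient, the GJP $\mathcal{J}^{-(k+1),-(k+1)}_{N+1}$ in branch $\mathcal{Z}_1$ with $\alpha=\beta=-(k+1)$; here one uses the $\alpha=\beta<0$ branch of Theorem \ref{thm1}, taking the upper bound $2^{-\tilde N}\Gamma(\tilde N-\alpha+1)/[\Gamma(\tilde N/2+1)\Gamma(\tilde N/2-\alpha+1)]$ valid for both parities of $\tilde N := N+1$ (more precisely $\tilde n + 2(k+1)$... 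I would track the exact index $\tilde n$ from \eqref{def2.3}), divide by the leading coefficient of $P^{k+1,k+1}_{\tilde n}$, and simplify to obtain the claimed product of Gamma functions.

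The main obstacle I anticipate is purely bookkeeping: correctly identifying which $N$ and which indices $(\alpha,\beta)$ appear in the GJP, keeping straight the three different meanings of ``$N$'' (the interpolation degree $N$, the GJP subscript $N+1$, and the internal $\tilde n$ from \eqref{def2.3}), and then carrying out the Gamma-function simplification of the ratio (maximum norm of the GJP) $/$ (its leading coefficient) so that it collapses to the compact forms stated. I would do this by writing the leading coefficient of $P^{a,b}_m(x)$ explicitly as $2^{-m}\binom{2m+a+b}{m}$ and matching powers of $2$ and Gamma factors term by term; the duplication formula for $\Gamma$ may be needed in the two-point case to reconcile $\Gamma(2N-2k+1)$ with the product $\Gamma((N+1)/2-k)\Gamma((N+1)/2+1)$. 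No new analytic idea is required beyond \eqref{lemma2312} and Theorem \ref{thm1}; the corollary is exactly their composition.
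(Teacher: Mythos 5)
Your proposal is correct and is exactly the paper's (implicit) argument: the corollary is stated as an immediate consequence of the remainder formula (\ref{lemma2312}) combined with Theorem \ref{thm1}, identifying the nodal polynomial as the appropriate GJP divided by the leading coefficient of its Jacobi factor, so the $\Gamma(N+2)$ from that coefficient cancels $(N+1)!$. The bookkeeping you flag does close up as you expect — in the two-point case the index entering Theorem \ref{thm1}'s bound is $\tilde n=N-2k-1$ (giving Gamma arguments $N-k+1$, $(N+1)/2-k$, $(N+1)/2+1$), and no duplication formula is actually needed since $\Gamma(2N-2k+1)$ comes directly from the leading coefficient of $P^{k+1,k+1}_{\tilde n}$.
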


Besides corollary 3.1, another error representation is given by (\ref{lemma2332}). We now focus on finding the superconvergence points $\{ \xi_i \}$ in the sense of
\begin{eqnarray}
N^{\beta} \max_{-1\leqslant \xi_i \leqslant 1}|(u-u_{N})^{(k+1)}(\xi_i)| \lesssim \max_{-1\leqslant x \leqslant 1}|(u-u_{N})^{(k+1)}(x)|,
\label{defspcvg}
\end{eqnarray}
where $u_{N}$ is one of $u_{NL}, u_{NR}, u_{NB}$, $k$ is the number given by (\ref{def31}), (\ref{def32}), (\ref{def33}) respectively, and $\beta$ is the gain of convergence rate. The superconvergence points $\{ \xi_i \}$ and $\beta$ are determined by the type of interpolants. Following the analysis in \cite{Zhang+2012+SINUM}, we can obtain the following theorem.


\begin{theorem}
Let $u(x)$ be analytic on [-1,1] and within $Berstein \ Ellipse$ $\mathcal{E}_{\rho}$ defined in (\ref{bersteindef}) with $\rho>1$, and let $u_{NL}$, $u_{NR}$, $u_{NB}$ be the interpolants defined in (\ref{def31}), (\ref{def32}) and (\ref{def33}) respectively. Suppose $k << N$, then we obtain the global error estimates as follows. For the interpolant $u_{NL}$, we have
\begin{eqnarray}
\max_{-1\leqslant x \leqslant 1}|(u-u_{NL})^{(k+1)}(x)| \leqslant c_1 M^{\rho}_u   (1+\frac{D_{\rho}}{2} N^2)^{k+1}\frac{\tilde{n}^{\frac{1}{2}}}{\rho^{\tilde{n}}},
\end{eqnarray}
the superconvergence points $\{{\xi_L^i} \}_{i=1}^{N-k}$ are zero points of $P^{k+1,0}_{N-k}(x)$, and
\begin{eqnarray}
\max_{1\leqslant i \leqslant N-k}|(u-u_{NL})^{(k+1)}(\xi_L^i)| \leqslant c_1^{'} M^{\rho}_u   (1+\frac{D_{\rho}}{2} N^2)^{k}\frac{\tilde{n}^{\frac{1}{2}}}{\rho^{\tilde{n}}}.
\end{eqnarray}
For the interpolant $u_{NR}$, we have
\begin{eqnarray}
\max_{-1\leqslant x \leqslant 1}|(u-u_{NR})^{(k+1)}(x)| \leqslant c_1 M^{\rho}_u   (1+\frac{D_{\rho}}{2} N^2)^{k+1}\frac{\tilde{n}^{\frac{1}{2}}}{\rho^{\tilde{n}}},
\end{eqnarray}
the superconvergence points $\{\xi_R^i \}_{i=1}^{N-k}$ are zero points of $P^{0,k+1}_{N-k}(x)$, and
\begin{eqnarray}
\max_{1\leqslant i \leqslant N-k}|(u-u_{NR})^{(k+1)}(\xi_R^i)| \leqslant c_1^{'} M^{\rho}_u   (1+\frac{D_{\rho}}{2} N^2)^{k} \frac{\tilde{n}^{\frac{1}{2}}}{\rho^{\tilde{n}}}.
\end{eqnarray}
For the interpolant $u_{NB}$, we have
\begin{eqnarray}
\max_{-1\leqslant x \leqslant 1}|(u-u_{NB})^{(k+1)}(x)| \leqslant c_2 M^{\rho}_u (N^{k+1}+O(N^{k-\frac{1}{2}}) ) \frac{\tilde{n}^{\frac{1}{2}}}{\rho^{\tilde{n}}},
\label{thm15}
\end{eqnarray}
the superconvergence points $\{\xi_B^i \}_{i=1}^{N-k}$ are zero points of $P^{0,0}_{N-k}(x)$, and
\begin{eqnarray}
\max_{1\leqslant i \leqslant N-k}|(u-u_{NB})^{(k+1)}(\xi_B^i)| \leqslant c_2^{'} M^{\rho}_u  (1+\frac{2}{e}N)^k N^{-\frac{1}{2}}\frac{\tilde{n}^{\frac{1}{2}}}{\rho^{\tilde{n}}},
\label{thm16}
\end{eqnarray}
where $c_i$, $c_i^{'}$, $i=1,2$, only depend on $\rho$, $k$.
\label{thm2}
\end{theorem}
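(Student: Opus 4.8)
The plan is to follow the contour-integral machinery of \cite{Zhang+2012+SINUM}. Starting from the Hermite remainder formula (\ref{lemma2332}), differentiate $k+1$ times under the integral sign to obtain
\begin{eqnarray*}
(u-u_N)^{(k+1)}(x)=\frac{(k+1)!}{2\pi i}\oint_{\mathcal{E}_\rho}\frac{u(z)}{(z-x)\,\omega_N(z)}\Big(\text{terms involving }\omega_N^{(j)}(x),\ j\le k+1\Big)\,dz,
\end{eqnarray*}
and observe that the leading term in the bracket is $\omega_N^{(k+1)}(x)/(k+1)!$ up to lower-order-in-$x$ pieces that vanish precisely at the superconvergence points. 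The first concrete step is therefore to identify $\omega_N(x)$ for each of the three interpolants in terms of GJPs: by construction $\omega_{NL}(x)=c\,(x+1)^{k+1}\prod_{i=1}^{\tilde n}(x-x_i)$ where the $x_i$ are the interior zeros of $\mathcal{J}^{0,-(k+1)}_{N+1}$, so that $\omega_{NL}(x)$ is (a constant multiple of) $(1+x)^{k+1}\mathcal{J}^{0,-(k+1)}_{N+1}(x)=(1+x)^{k+1}\cdot(1+x)^{k+1}P^{0,k+1}_{N-k}(x)$ up to normalization — wait, more carefully $\mathcal{J}^{0,-(k+1)}_{N+1}(x)=(1+x)^{k+1}P^{0,k+1}_{N-k}(x)$, so $\omega_{NL}(x)$ is proportional to $(1+x)^{k+1}\mathcal{J}^{0,-(k+1)}_{N+1}(x)$ with the $(1+x)^{k+1}$ factor supplying the required multiplicity at $x=-1$. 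Similarly $\omega_{NR}(x)\propto(1-x)^{k+1}\mathcal{J}^{-(k+1),0}_{N+1}(x)$ and $\omega_{NB}(x)\propto(1-x)^{k+1}(1+x)^{k+1}\mathcal{J}^{-(k+1),-(k+1)}_{N+1}(x)$.

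Next I would compute $\omega_N^{(k+1)}(x)$. Using the Sturm--Liouville/derivative relations for GJPs (Lemma \ref{lemma1}) and the classical identity $\frac{d}{dx}P^{a,b}_n=\frac12(n+a+b+1)P^{a+1,b+1}_{n-1}$, repeated differentiation of, say, $(1+x)^{k+1}\mathcal{J}^{0,-(k+1)}_{N+1}(x)$ should collapse to a constant multiple of $P^{k+1,0}_{N-k}(x)$ plus terms of strictly lower degree-in-$x$ weighting; this is the algebraic heart of the argument and is where I expect the bookkeeping to be heaviest. The upshot is that the zero set of the leading term of $(u-u_N)^{(k+1)}$ is exactly $\{$zeros of $P^{k+1,0}_{N-k}\}$ for $u_{NL}$, $\{$zeros of $P^{0,k+1}_{N-k}\}$ for $u_{NR}$, and $\{$zeros of $P^{0,0}_{N-k}\}$ (Legendre) for $u_{NB}$ — giving precisely $N-k$ points in each case, matching the claimed $\{\xi^i\}_{i=1}^{N-k}$.

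For the size estimates, I would bound the contour integral in the standard way: $|(u-u_N)^{(k+1)}(x)|\lesssim M^\rho_u\,\mathcal{L}(\mathcal{E}_\rho)\,\frac{\max_x|\omega_N^{(k+1)}(x)|}{\mathcal{D}_\rho\,\min_{z\in\mathcal{E}_\rho}|\omega_N(z)|}$, then insert the asymptotics $\mathcal{L}(\mathcal{E}_\rho)\le\pi(\rho+\rho^{-1})^{1/2}$, $\mathcal{D}_\rho=\tfrac12(\rho+\rho^{-1})-1$, and the growth $\min_{z\in\mathcal{E}_\rho}|\mathcal{J}^{\cdots}_{N+1}(z)|\sim\rho^{\tilde n}$ together with Theorem \ref{thm1} for $\max_x|\mathcal{J}^{\cdots}_{N+1}(x)|$. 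The global bound carries a factor $(1+\tfrac{D_\rho}{2}N^2)^{k+1}$ coming from the $k+1$ differentiations each of which costs a factor $O(N^2)$ (the Markov-type bound $\|p'\|\le N^2\|p\|$ on $[-1,1]$, refined to include the $D_\rho$-scaling off the interval); at a superconvergence point the leading $x$-dependent factor drops out, removing one power of $(1+\tfrac{D_\rho}{2}N^2)$ and yielding the exponent $k$ instead of $k+1$. For $u_{NB}$ the two-sided weight $(1-x^2)^{k+1}$ changes the relevant GJP to $\mathcal{J}^{-(k+1),-(k+1)}_{N+1}$, whose $L^\infty$ norm by the $\alpha=\beta<0$ case of Theorem \ref{thm1} grows like $2^{-(N+1)}\binom{N+k}{(N+1)/2}\sim C\,2^{N}N^{-1/2}/N^{\cdots}$ — producing the $(1+\tfrac2e N)^k N^{-1/2}$ and $N^{k+1}+O(N^{k-1/2})$ shapes in (\ref{thm16}) and (\ref{thm15}) via Stirling. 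The main obstacle throughout is making the ``lower-order-in-$x$ terms vanish at the $\xi^i$'' step fully rigorous: one must show that after the $k+1$ differentiations the non-leading contributions are genuinely subordinate (a factor $N^{-1}$ or $N^{-1/2}$ smaller) uniformly, rather than merely formally lower degree, which requires careful tracking of the GJP normalization constants $\eta^{\alpha,\beta}_N$ and the Stirling asymptotics of the binomial coefficients in Theorem \ref{thm1}.
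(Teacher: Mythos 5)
Your strategy coincides with the paper's: represent the error by the contour formula (\ref{lemma2332}), apply Leibniz's rule to $D^{k+1}\bigl(\omega_{N+1}(x)/(z-x)\bigr)$, identify the nodal polynomial with a generalized Jacobi polynomial, bound $\max_{x}|D^{l}\omega_{N+1}(x)|$ through Lemma \ref{lemma1} and Theorem \ref{thm1}, and lower-bound $|\omega_{N+1}(z)|$ on $\mathcal{E}_{\rho}$ via $\min_{z}|(1\pm z)|$ together with the Bernstein-type lower bound for Jacobi polynomials. However, your identification of the nodal polynomials is wrong as written, and this is the step on which everything else hinges. The GJP already carries the boundary weight: for $u_{NL}$ one has $\omega_{N+1}(x)\propto \mathcal{J}^{0,-(k+1)}_{N+1}(x)=(1+x)^{k+1}P^{0,k+1}_{N-k}(x)$, a polynomial of degree $N+1$, whereas you repeatedly write $\omega_{NL}\propto(1+x)^{k+1}\mathcal{J}^{0,-(k+1)}_{N+1}$ (degree $N+k+2$), and analogously for $u_{NR}$ and $u_{NB}$. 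This is not a harmless normalization slip: the superconvergence points are precisely the zeros of the top Leibniz term $D^{k+1}\omega_{N+1}$, and with the correct $\omega_{N+1}$ iterating Lemma \ref{lemma1} gives exactly a constant multiple of $\mathcal{J}^{k+1,0}_{N-k}=P^{k+1,0}_{N-k}$ (resp.\ $P^{0,k+1}_{N-k}$, $P^{0,0}_{N-k}$) with no lower-order remainder, while with your doubled weight you would obtain a multiple of $\mathcal{J}^{k+1,-(k+1)}_{N+1}=(1+x)^{k+1}P^{k+1,k+1}_{N-k}$ and hence predict the wrong points (and the wrong number of them). The heavy ``bookkeeping'' you anticipate disappears once the identification is made correctly.

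Two further points. The obstacle you single out --- proving that the non-leading contributions are ``genuinely subordinate'' at the $\xi^{i}$ --- does not arise in this argument: in the Leibniz sum the $l$-th term is bounded termwise by a constant times $M^{\rho}_u\,(\tfrac{D_{\rho}}{2}N^{2})^{l}\,\tilde{n}^{1/2}\rho^{-\tilde{n}}$ (one-point case), so the global estimate keeps all $k+2$ terms and the superconvergence estimate simply drops the vanished $l=k+1$ term; the gain of one factor $(1+\tfrac{D_{\rho}}{2}N^{2})$ is automatic, with no uniform-subordination argument needed. Finally, be careful with the Markov heuristic ``each differentiation costs $N^{2}$'': that is the right order for $u_{NL}$ and $u_{NR}$, but for $u_{NB}$ the proof requires the sharper bound $\max_{x}|D^{l}\omega_{N+1}(x)|\lesssim (2/e)^{l}N^{l-1/2}$ for $l\leqslant k$, obtained from the exact $\mathcal{Z}_1$ derivative formula plus the $\alpha=\beta<0$ case of Theorem \ref{thm1} and Stirling; a crude $N^{2}$-per-derivative bound would let the $l\leqslant k$ terms dominate the $l=k+1$ term already for $k\geqslant 2$ and would destroy both the stated global bound \eqref{thm15} and the $O(N^{-3/2})$ gain in \eqref{thm16}.
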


\begin{proof}
Since $u_{NL}$ and $u_{NR}$ are completely symmetric, without loss of generality, we only consider the cases of $u_{NL}$ and $u_{NB}$ in the proof. According to (\ref{lemma2332}), we have
\begin{eqnarray}
&&D^{k+1} (u(x)-u_N(x))=D^{k} R_N(u;x) \label{differror} \\
&=&\frac{1}{2\pi i} \oint_{\mathcal{E}_\rho} D^{k+1} \left( \frac{\omega_{N+1}(x)}{z-x} \right)\frac{u(z)}{\omega_{N+1}(z)} dz \nonumber \\
&=&\frac{1}{2 \pi i} \sum_{l=0}^{k+1} \binom{k+1}{l} \oint_{\mathcal{E}_\rho} \frac{D^{l} \omega_{N+1}(x)} {(z-x) ^{k-l+2}}\frac{u(z)}{\omega_{N+1}(z)} dz,\quad \quad \forall x \in[-1,1].
\label{error}
\end{eqnarray}
If $u_N(x)=u_{NL}(x)$, then $\omega_{N+1}(x)=(A_{\tilde{n}}^{0,k+1})^{-1}\mathcal{J}^{0,-(k+1)}_{N+1}(x)$, where $A_{\tilde{n}}^{0,k+1}$ denotes the leading coefficient. Since there is a $\omega_{N+1}(z)$ in the denominator, we just consider $\omega_{N+1}(x)=\mathcal{J}^{0,-(k+1)}_{N+1}(x)$. According to Lemma \ref{lemma1}, we have
\begin{eqnarray}
D^{l} \omega_{N+1}(x)=\frac{\Gamma(N+2)}{\Gamma(N-l+2)} \mathcal{J}^{l,-k-1+l}_{N+1-l}(x), \ l=0,1,\cdots k+1.
\end{eqnarray}
When $N$ is large enough ($N \geqslant 3$), we arrive at
\begin{eqnarray}
\max_{-1\leqslant x\leqslant1} |D^{l}\omega_{N+1}(x)| =  \frac{\Gamma(N+2)}{\Gamma(N-l+2)} 2^{k+1-l}\binom{\tilde{n}+l}{\tilde{n}} \leqslant 2^{k+2} (\frac{N^2}{2})^l,
\end{eqnarray}
where $\tilde{n}=N-k$. On the other hand, according to the analysis in (\cite{LWang+2014+JSC}, (4.7)), $\forall \rho>1, k \in \mathbb{N}$, $\exists C_1(\rho,k)>0$, such that
\begin{eqnarray}
\min_{z \in \mathcal{E}_{\rho}} |P^{0,k+1}_{\tilde{n}}(z)| \geqslant C_1(\rho,k) \tilde{n}^{-\frac{1}{2}} \rho^{\tilde{n}}.
\end{eqnarray}
Noting that
$$\min_{z \in \mathcal{E}_{\rho}} |(1+z)^{k+1}| = D_{\rho}^{k+1}$$
and from (\ref{def2.3}), we have
\begin{eqnarray}
\min_{z \in \mathcal{E}_{\rho}} |\mathcal{J}^{0,-(k+1)}_{N+1} (z)| &\geqslant& \min_{z \in \mathcal{E}_{\rho}} |(1+z)^{k+1}| \cdot \min_{z \in \mathcal{E}_{\rho}} |P^{0,k+1}_{\tilde{n}}(z)| \nonumber\\
& \geqslant&  C_1(\rho,k) D^{k+1}_{\rho} \tilde{n}^{-\frac{1}{2}} \rho^{\tilde{n}}.
\end{eqnarray}
The identity (\ref{error}) thus turns out to be
\begin{eqnarray}
&=& \frac{1}{2 \pi i} \sum_{l=0}^{k+1} \binom{k+1}{l} \frac{\Gamma(N+2)}{\Gamma(N-l+2)} \oint_{\mathcal{E}_\rho} \frac{\mathcal{J}^{l,-k-1+l}_{N+1-l}(x)} {(z-x) ^{k+2-l}}\frac{u(z)}{\mathcal{J}^{0,-(k+1)}_{N+1}(z)} dz \label{thm1p1}\\
&\leqslant& \frac{1}{2 \pi} \sum_{l=0}^{k+1} \binom{k+1}{l} \frac{\Gamma(N+2)}{\Gamma(N-l+2)} \oint_{\mathcal{E}_\rho} \frac{|\mathcal{J}^{l,-k-1+l}_{N+1-l}(x)|} {|(z-x)| ^{k+2-l}}\frac{|u(z)|}{|\mathcal{J}^{0,-(k+1)}_{N+1}(z)|} d|z| \nonumber \\
&\leqslant&  \frac{1}{2 \pi} \sum_{l=0}^{k+1} \binom{k+1}{l} 2^{k+2} (\frac{N^2}{2})^l \frac{M^{\rho}_u \mathcal{L}(\mathcal{E}_{\rho})}{D_{\rho}^{k+2-l}} \cdot \frac{\tilde{n}^{\frac{1}{2}}}{C_1(\rho,k) D_{\rho}^{k+1}  \rho^{\tilde{n}}} \nonumber\\
&=&M^{\rho}_u\frac{2^{k+1} \mathcal{L}(\mathcal{E}_{\rho})}{\pi C_1(\rho,k) D_{\rho}^{2k+3} }  \frac{\tilde{n}^{\frac{1}{2}}}{\rho^{\tilde{n}}} \sum_{l=0}^{k+1} \binom{k+1}{l} (\frac{D_{\rho}}{2}N^2)^l \nonumber\\
&=& M^{\rho}_u\frac{2^{k+1} \mathcal{L}(\mathcal{E}_{\rho})}{\pi C_1(\rho,k) D_{\rho}^{2k+3}}  \frac{\tilde{n}^{\frac{1}{2}}}{\rho^{\tilde{n}}} (1+\frac{D_{\rho}}{2} N^2)^{k+1}.
\end{eqnarray}
When $x=\xi_L^i$, $i=1,\cdots,N-k$, where $\{\xi_L^i \}_{i=1}^{N-k}$ are zero points of $P^{k+1,0}_{N-k}(x)$, the last term in (\ref{thm1p1}) vanishes. This is, for $1\leqslant i \leqslant N-k$, we have
\begin{eqnarray}
&&|D^{k+1} (u(\xi_L^i)-u_N(\xi_L^i))| \nonumber\\
&=& \frac{1}{2 \pi i} \sum_{l=0}^{k} \binom{k+1}{l} \frac{\Gamma(N+2)}{\Gamma(N-l+2)} | \oint_{\mathcal{E}_\rho} \frac{\mathcal{J}^{l,-k-1+l}_{N+1-l}(\xi_L^i)} {(z-\xi_L^i) ^{k+2-l}}\frac{u(z)}{\mathcal{J}^{0,-(k+1)}_{N+1}(z)} dz|  \nonumber \\
&\leqslant& M^{\rho}_u\frac{2^{k+1} \mathcal{L}(\mathcal{E}_{\rho})}{\pi C_1(\rho,k) D_{\rho}^{2k+3}}  \frac{\tilde{n}^{\frac{1}{2}}}{\rho^{\tilde{n}}} \sum_{l=0}^{k} \binom{k+1}{l} (\frac{D_{\rho}}{2}N^2)^l \\
&\leqslant& M^{\rho}_u\frac{2^{k+1} (k+1) \mathcal{L}(\mathcal{E}_{\rho})}{\pi C_1(\rho,k) D_{\rho}^{2k+3}}  \frac{\tilde{n}^{\frac{1}{2}}}{\rho^{\tilde{n}}}  (1+\frac{D_{\rho}}{2} N^2)^{k}.
\end{eqnarray}
On the other hand, if $u_N(x)=u_{NB}(x)$, then $\omega_{N+1}(x)=(A_{\tilde{n}}^{k+1,k+1})^{-1}\mathcal{J}^{-(k+1),-(k+1)}_{N+1}(x)$. Similarly, we have:
\begin{eqnarray}
D^{l} \omega_{N+1}(x)=(-2)^l \frac{\Gamma(N-2k+l)}{\Gamma(N-2k)} \mathcal{J}^{l-k-1,l-k-1}_{N+1-l}(x), \ l=0,1,\cdots k+1.
\end{eqnarray}
when $N$ is large enough. By Stirling's formula, we achieve
\begin{eqnarray}
\max_{-1\leqslant x\leqslant1} |D^{l}\omega_{N+1}(x)| &\leqslant&  \frac{\Gamma(N-2k+l) \Gamma(N-k+1+l)}{2^{N-2k-1}\Gamma(N-2k) \Gamma(\frac{N-2k+1+l}{2}) \Gamma(\frac{N+3+l}{2})}
\label{polyesx20} \\
&\leqslant& \frac{2^{k+2}}{\sqrt{\pi}} (\frac{2}{e})^l N^{l-\frac{1}{2}} , \ l=0,\cdots, k. \label{polyesx21}
\end{eqnarray}
Taking $l=k+1$ yields
\begin{eqnarray}
\max_{-1\leqslant x\leqslant1} |D^{k+1}\omega_{N+1}(x)| &=& 2^{k+1} \frac{\Gamma(N-k+1)}{\Gamma(N-2k)} \max_{-1\leqslant x\leqslant 1}| P^{0,0}_{N-k}(x)| \nonumber\\
&\leqslant& 2^{k+1} N^{k+1}. \label{polyesx22}
\end{eqnarray}
Again, $\forall \rho>1, k \in \mathbb{N}$, $\exists C_2(\rho,k)>0$, such that
\begin{eqnarray*}
\min_{z \in \mathcal{E}_{\rho}} |P^{k+1,k+1}_{\tilde{n}}(z)| \geqslant C_2(\rho,k) \tilde{n}^{-\frac{1}{2}} \rho^{\tilde{n}},
\end{eqnarray*}
and for $\forall z \in \mathcal{E}_{\rho}$, we have
\begin{eqnarray*}
|(1+z)(1-z) | &=& \frac{1}{4} |(\rho e^{i \theta}-\rho^{-1}e^{-i\theta})^2|=\frac{1}{4} |(\rho-\rho^{-1})\cos \theta+ i (\rho+\rho^{-1})\sin \theta |^2 \\
&=& \frac{1}{4} [ (\rho^2+\rho^{-2}-2) \cos^2 \theta +(\rho^2+\rho^{-2}+2) \sin^2 \theta ] \\
&=& \frac{1}{4} (\rho-\rho^{-1})^2 +  \sin^2 \theta \geqslant \frac{1}{4}(\rho-\rho^{-1})^2.
\end{eqnarray*}
Therefore
\begin{eqnarray}
\min_{z \in \mathcal{E}_{\rho}} |\mathcal{J}^{-(k+1),-(k+1)}_{N+1} (z)| &\geqslant& \min_{z \in \mathcal{E}_{\rho}} |(1+z)(1-z)|^{k+1} \cdot \min_{z \in \mathcal{E}_{\rho}} |P^{k+1,k+1}_{\tilde{n}}(z)| \nonumber \\
& \geqslant&  \frac{(\rho-\rho^{-1})^{2k+2}}{4^{k+1}} C_2(\rho,k)  \tilde{n}^{-\frac{1}{2}} \rho^{\tilde{n}}.
\label{polyesz2}
\end{eqnarray}
Similar to previous case, by inputing (\ref{polyesx21}), (\ref{polyesx22}) and (\ref{polyesz2}) into (\ref{error}), the proof is complete.
\end{proof}


\begin{remark}
Let $f(N;k,l)$ be defined by the right hand side in \eqref{polyesx20}, i.e.
\begin{eqnarray}
f(N;k,l) := \frac{\Gamma(N-2k+l) \Gamma(N-k+1+l)}{2^{N-2k-1}\Gamma(N-2k) \Gamma(\frac{N-2k+1+l}{2}) \Gamma(\frac{N+3+l}{2})}.
\end{eqnarray}
\begin{figure}[htbp] 
   \centering
    \includegraphics[width=2.5in]{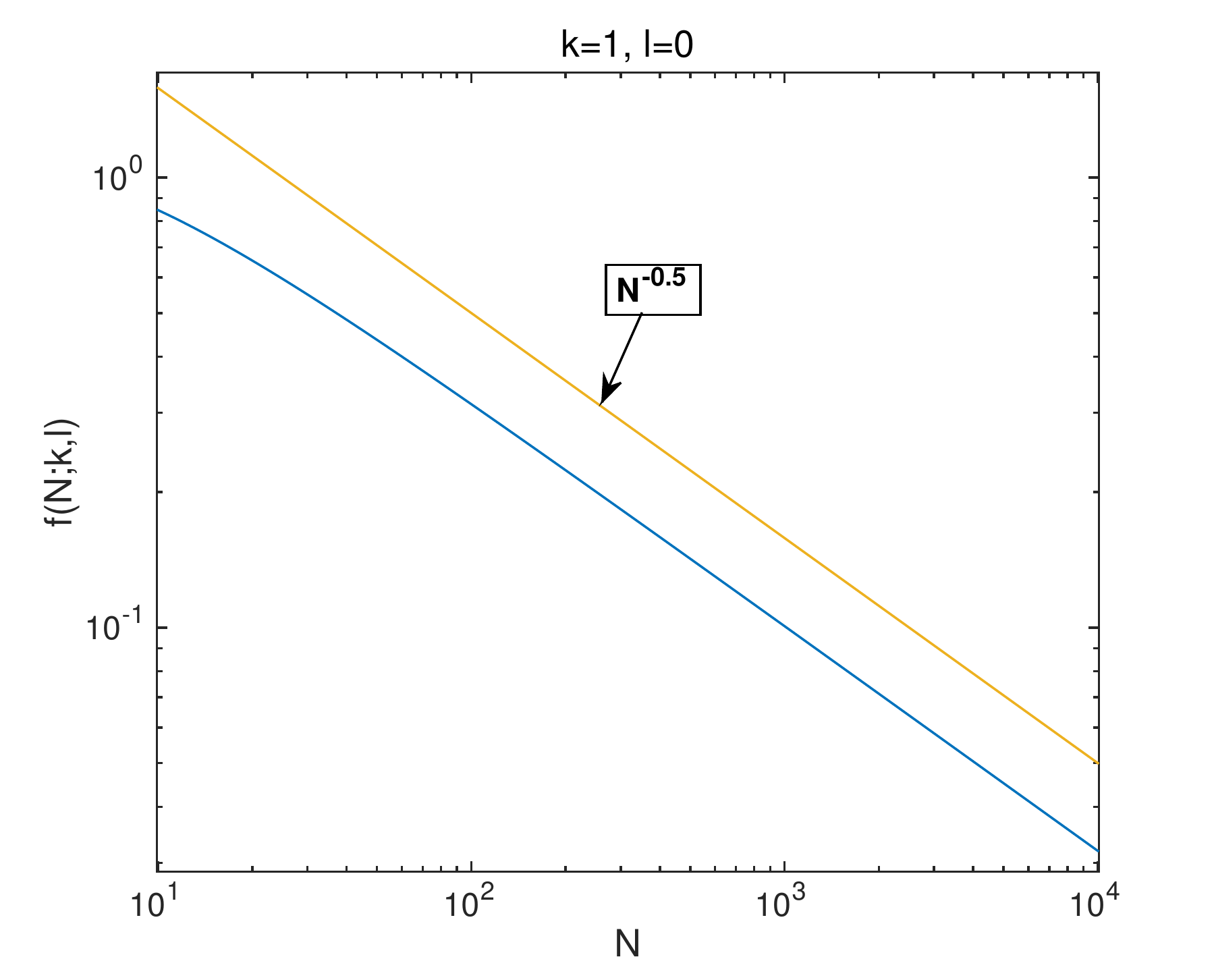}
    \includegraphics[width=2.5in]{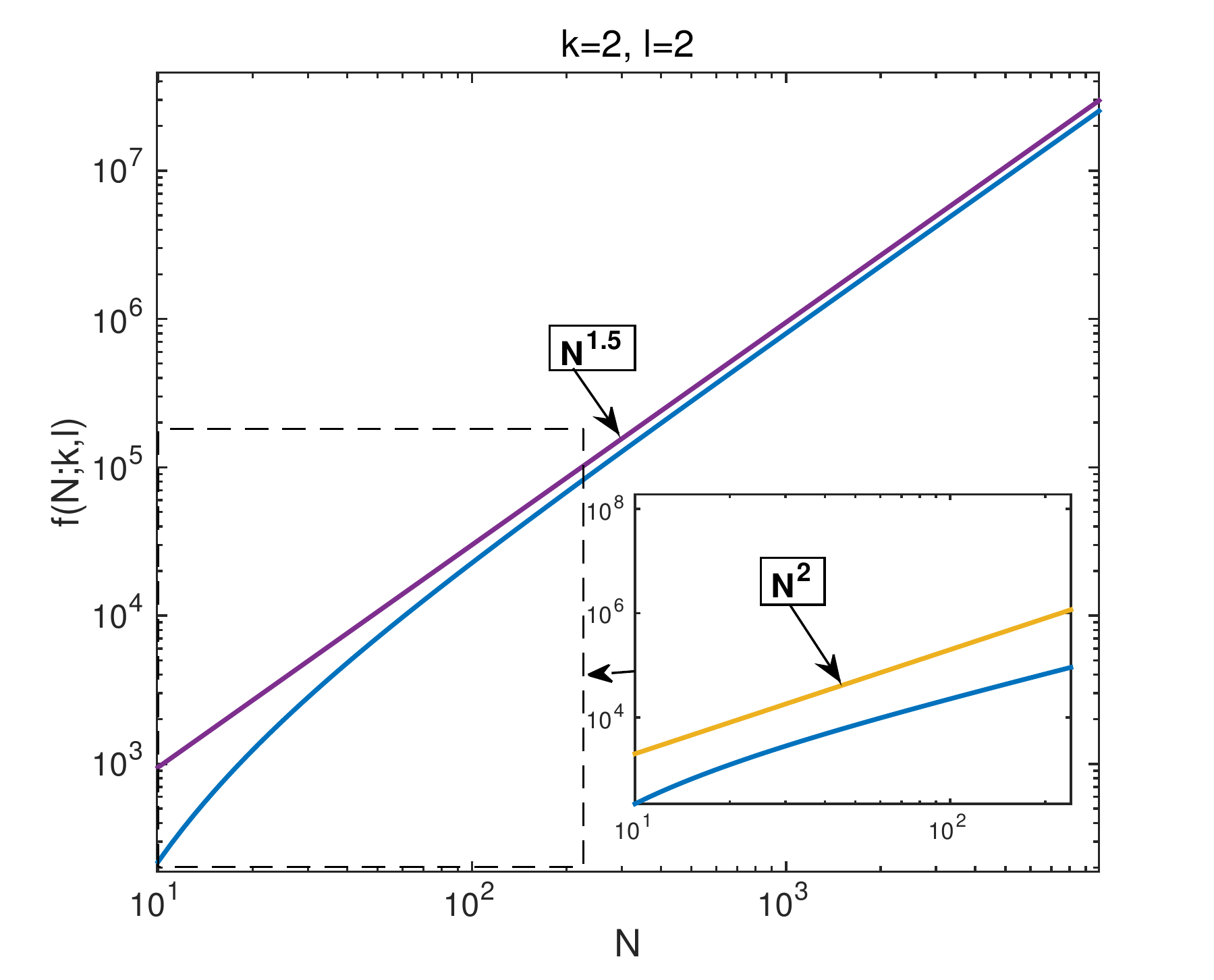}
   \caption{The graphs of $f(N;k,l)$ for $N \in [10,10^4]$, where $k=1$, $l=0$ (Left), $k=2$, $l=2$ (Right).}
   \label{fg31}
\end{figure}
In Fig. \ref{fg31}, it shows that $f(N;k,l)=O(N^{l-\frac{1}{2}})$ when $N$ is large enough. However, when $N$ is small, especially $N \leqslant 30$, it is more likely that $$f(N;k,l) \lesssim O(N^{l}).$$
Therefore, even though \eqref{thm15} and \eqref{thm16} in Theorem \ref{thm2} show that the gain of convergence rate at superconvergence points is $O(N^{-\frac{3}{2}})$ when $N$ is large enough. While $N \leqslant 30$, it seems the gain of convergence rate behaves like $O(N^{-1})$.
\end{remark}

\begin{remark}
Similar to Chebyshev and Legendre interpolation, the three kinds of Hermite interpolations are exponentially convergent as well. Strictly speaking, they are exponentially convergent with respect to $\tilde{n}$, the number of interpolation points except for $x=\pm1$, instead of $N$, the degree of interpolants. Therefore, it will significantly improve the accuracy to increase the number of inside interpolation points. On the other hand, for any fixed $k$, because of the exponential convergence rate, $u_N$ converges to $u$ very fast. So admissible results can be usually obtained when $N$ is small .
\end{remark}

Since all of the three kinds of superconvergence points are zero points of Jacobi polynomials, they can be efficiently calculated.

\section{Hermite Interpolation for Riemann Liouville Derivatives with Arbitrary Positive Order}

\subsection{Theoretical Statements}
As mentioned in introduction, the Hermite interpolations are mainly designed to resolve the singularities of the fractional differential operators. Let $k<\alpha <k+1$, where $k$ is a nonnegative integer. For the left Riemann Liouville fractional operator $_{-1}D_x^{\alpha}$ which is defined in (\ref{def11}), one can obviously observe that
\begin{eqnarray}
(u-u_{NL})^{(l)}(-1)=0, \ l=0,1,\cdots, k,
\label{chap41}
\end{eqnarray}
and $_{-1}D_x^{\alpha}(u(x)-u_{NL}(x))$ is bounded on $[-1,1]$. As for the right Riemann-Liouville fractional operator $_{x}D_1^{\alpha}$ defined in (\ref{def12}), similarly, we have
\begin{eqnarray}
(u-u_{NR})^{(l)}(1)=0, \ l=0,1,\cdots, k,
\end{eqnarray}
and $_{x}D_1^{\alpha}(u(x)-u_{NR}(x))$ is bounded on $[-1,1]$ as well.
\begin{remark}
If interpolating in these ways, it is equivalent to take Riemann-Liouville derivative and Caputo derivative, so the following results also work for the Caputo fractional derivative.
\end{remark}

\begin{theorem}
Let $k<\alpha <k+1$, where $k$ is a nonnegative integer. Let $u_{NL}$ and $u_{NR}$ be the interpolants defined in (\ref{def31}) and (\ref{def32}) respectively. Then
\begin{eqnarray}
\Vert {_{-1}D_x^{\alpha}}(u-u_{NL}) \Vert_{L^{\infty}[-1,1]} \leqslant \frac{2^{k+1-\alpha}}{\Gamma(k+2-\alpha)} \Vert (u-u_{NL})^{(k+1)}\Vert_{L^{\infty}[-1,1]}; \label{thm21}\\
\Vert {_{x}D_1^{\alpha}}(u-u_{NR}) \Vert_{L^{\infty}[-1,1]} \leqslant \frac{2^{k+1-\alpha}}{\Gamma(k+2-\alpha)} \Vert (u-u_{NR})^{(k+1)}\Vert_{L^{\infty}[-1,1]}. \label{thm22}
\end{eqnarray}
\label{thm3}
\end{theorem}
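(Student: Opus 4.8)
The plan is to prove the bound for $_{-1}D_x^{\alpha}(u-u_{NL})$ directly from the series representation (\ref{def11}) of the left Riemann--Liouville derivative, and then invoke the left--right symmetry to obtain (\ref{thm22}) without extra work. Write $e := u - u_{NL}$. By (\ref{chap41}) we know $e^{(l)}(-1) = 0$ for $l = 0, 1, \dots, k$, so in the expansion
\begin{eqnarray*}
{_{-1}D_x^{\alpha}}e(x) = \sum_{l=0}^{k}\frac{e^{(l)}(-1)}{\Gamma(l+1-\alpha)}(x+1)^{-\alpha+l} + {_{-1}I_x^{k+1-\alpha}}e^{(k+1)}(x)
\end{eqnarray*}
(here $\alpha \in (k, k+1)$, so the integer in the definition is $k+1$) the entire finite sum vanishes, leaving only the fractional integral of the top derivative:
\begin{eqnarray*}
{_{-1}D_x^{\alpha}}e(x) = {_{-1}I_x^{k+1-\alpha}}e^{(k+1)}(x) = \frac{1}{\Gamma(k+1-\alpha)}\int_{-1}^{x}\frac{e^{(k+1)}(\tau)}{(x-\tau)^{\alpha-k}}\,d\tau .
\end{eqnarray*}

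The next step is a routine estimate of this single integral. I would bound $|e^{(k+1)}(\tau)|$ by $\|e^{(k+1)}\|_{L^\infty[-1,1]}$, pull it out, and compute
\begin{eqnarray*}
\int_{-1}^{x}\frac{d\tau}{(x-\tau)^{\alpha-k}} = \frac{(x+1)^{k+1-\alpha}}{k+1-\alpha},
\end{eqnarray*}
which is valid since $0 < \alpha - k < 1$ makes the integrand integrable. Using $\Gamma(k+1-\alpha)(k+1-\alpha) = \Gamma(k+2-\alpha)$, this gives
\begin{eqnarray*}
|{_{-1}D_x^{\alpha}}e(x)| \leqslant \frac{(x+1)^{k+1-\alpha}}{\Gamma(k+2-\alpha)}\,\|e^{(k+1)}\|_{L^{\infty}[-1,1]} .
\end{eqnarray*}
Finally, maximize over $x \in [-1,1]$: since $k+1-\alpha > 0$, the factor $(x+1)^{k+1-\alpha}$ is increasing in $x$ and attains its maximum $2^{k+1-\alpha}$ at $x = 1$. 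This yields exactly (\ref{thm21}).

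For (\ref{thm22}), I would observe that the right Riemann--Liouville operator and the interpolant $u_{NR}$ are the mirror images under $x \mapsto -x$ of their left counterparts; applying the change of variables to the result just proved, or equivalently repeating the same three steps starting from (\ref{def12}) with $(u-u_{NR})^{(l)}(1) = 0$ for $l = 0,\dots,k$, gives the stated bound with the same constant. I do not expect any genuine obstacle here: the only things to be careful about are (i) matching the index conventions of Definition 2.1 — the theorem's hypothesis $k < \alpha < k+1$ corresponds to ``$\alpha \in (k-1,k)$ with $k$ replaced by $k+1$'' in (\ref{def11}), so the fractional integral that survives is $_{-1}I_x^{k+1-\alpha}$ of order $k+1-\alpha \in (0,1)$; and (ii) confirming that the integral $\int_{-1}^x (x-\tau)^{-(\alpha-k)}\,d\tau$ converges, which it does precisely because $\alpha - k < 1$. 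Everything else is elementary, and no analyticity of $u$ is even needed for this particular estimate — only that $e^{(k+1)}$ is bounded (continuous on the closed interval), which holds since $u_{NL} \in \mathcal{P}_N$ and $u$ is smooth.
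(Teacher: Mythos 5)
Your proposal is correct and follows essentially the same route as the paper: the vanishing boundary data $(u-u_{NL})^{(l)}(-1)=0$, $l=0,\dots,k$, reduces ${_{-1}D_x^{\alpha}}(u-u_{NL})$ to ${_{-1}I_x^{k+1-\alpha}}(u-u_{NL})^{(k+1)}$, which is then bounded by pulling out the sup norm, evaluating $\int_{-1}^x (x-\tau)^{k-\alpha}\,d\tau=(1+x)^{k+1-\alpha}/(k+1-\alpha)$, and using $(k+1-\alpha)\Gamma(k+1-\alpha)=\Gamma(k+2-\alpha)$ before maximizing $(1+x)^{k+1-\alpha}\leqslant 2^{k+1-\alpha}$. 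The right-sided estimate by the mirror symmetry $x\mapsto -x$ is exactly how the paper disposes of \eqref{thm22} as well.
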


\begin{proof}
Without loss of generality, we only consider the poof of left Riemann Liouville derivatives. According to (\ref{def11}), (\ref{chap41}) and Holder's Inequality yields
\begin{eqnarray}
&&|_{-1}D_x^{\alpha}(u(x)-u_{NL}(x))| \nonumber \\
&=& \frac{1}{\Gamma(k+1-\alpha)} \Big \vert \int_{-1}^{x}\frac{(u-u_{NL})^{(k+1)}(t)}{(x-t)^{\alpha-k}}dt \Big| \nonumber \\
&\leqslant& \frac{\Vert (u-u_{NL})^{(k+1)} \Vert_{L^{\infty}}}{\Gamma(k+1-\alpha)} \int_{-1}^x (x-t)^{k-\alpha}dt \nonumber \\
&=&\frac{(1+x)^{k+1-\alpha}}{\Gamma(k+2-\alpha)} \Vert (u-u_{NL})^{(k+1)} \Vert_{L^{\infty}[-1,1]}, \forall x \in [-1,1].\label{thm2p}
\end{eqnarray}
Therefore,
\begin{eqnarray*}
\Vert {_{-1}D_x^{\alpha}}(u-u_{NL}) \Vert_{L^{\infty}[-1,1]} \leqslant \frac{2^{k+1-\alpha}}{\Gamma(k+2-\alpha)} \Vert (u-u_{NL})^{(k+1)}\Vert_{L^{\infty}[-1,1]}.
\end{eqnarray*}
\end{proof}
\begin{remark}
Let $f(x)=\frac{2^{x}}{\Gamma(x+1)}$. Then $f(0)=1$, $f(1)=2$, so $f(x)$ is uniformly continuous and bounded on $[1,2]$. Then we have
\begin{eqnarray}
\lim_{\alpha \to k} \frac{2^{k+1-\alpha}}{\Gamma(k+2-\alpha)}=1, \ \text{and} \ \lim_{\alpha \to k+1} \frac{2^{k+1-\alpha}}{\Gamma(k+2-\alpha)}=2.
\end{eqnarray}
Therefore, $\exists C>0$, such that $\forall \alpha \in (k,k+1)$, we always have: $$\frac{2^{k+1-\alpha}}{\Gamma(k+2-\alpha)} \leqslant C=2.$$
\end{remark}
Parallel to the conclusion in \cite{Zhao+2016+SISC}, we have the following theorem.

\begin{theorem}
Let $\alpha \in (k,k+1)$, let $u_{NL}$ and $u_{NR}$ be the interpolants defined in (\ref{def31}) and (\ref{def32}) respectively. The $\alpha$-th left Riemann Liouville fractional derivative superconverges at $\{ _L\xi_i^{\alpha} \}$ satisfying
\begin{eqnarray}
_{-1} I_x^{k+1-\alpha} P^{k+1,0}_{N-k}(_L\xi_i^{\alpha})=0,\ i=1,\cdots,N-k.
\label{superconrllz}
\end{eqnarray}
Similarly, the $\alpha$-th right Riemann Liouville fractional derivative superconverges at $\{ _R\xi_i^{\alpha} \}$ satisfying
\begin{eqnarray}
_x I_1^{k+1-\alpha} P^{0,k+1}_{N-k}(_R\xi_i^{\alpha})=0,\ i=1,\cdots,N-k.
\label{superconrlrz}
\end{eqnarray}
\label{thm6}
\end{theorem}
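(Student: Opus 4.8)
The plan is to combine the integer-order analysis of Theorem~\ref{thm2} with one fractional integration and then read off the zeros of a single explicit weighted Jacobi polynomial. Since the interpolation conditions force $(u-u_{NL})^{(l)}(-1)=0$ for $l=0,\dots,k$, the Riemann--Liouville and Caputo forms of the derivative coincide on $u-u_{NL}$ (this is exactly the computation leading to \eqref{thm2p}, together with the remark after Theorem~\ref{thm3}), so that
\[
{_{-1}D_x^{\alpha}}(u-u_{NL})(x)={_{-1}I_x^{k+1-\alpha}}\big[(u-u_{NL})^{(k+1)}\big](x),\qquad x\in[-1,1].
\]
Into this I would feed the structure of $(u-u_{NL})^{(k+1)}$ uncovered in the proof of Theorem~\ref{thm2}: since $\omega_{N+1}\propto\mathcal{J}^{0,-(k+1)}_{N+1}=(1+x)^{k+1}P^{0,k+1}_{N-k}(x)$ and $D^{k+1}\mathcal{J}^{0,-(k+1)}_{N+1}\propto P^{k+1,0}_{N-k}(x)$, the leading part (in powers of $N$) of $(u-u_{NL})^{(k+1)}$ is a slowly varying multiple of $P^{k+1,0}_{N-k}(x)$ — which is precisely why the integer-order superconvergence points of Theorem~\ref{thm2} are the zeros of $P^{k+1,0}_{N-k}$.

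The next step is to make the leading part explicit after applying ${_{-1}I_x^{k+1-\alpha}}$, using the closed-form rule for the Riemann--Liouville derivative of a weighted Jacobi polynomial (cf.\ \cite{Chen+2014+mathcom,Zhao+2016+SISC}),
\[
{_{-1}D_x^{\alpha}}\!\left[(1+x)^{k+1}P^{0,k+1}_{N-k}(x)\right]=\frac{\Gamma(N+2)}{\Gamma(N+2-\alpha)}(1+x)^{k+1-\alpha}P^{\alpha,k+1-\alpha}_{N-k}(x),
\]
together with its companion
\[
{_{-1}I_x^{k+1-\alpha}}P^{k+1,0}_{N-k}(x)=\frac{\Gamma(N-k+1)}{\Gamma(N+2-\alpha)}(1+x)^{k+1-\alpha}P^{\alpha,k+1-\alpha}_{N-k}(x).
\]
The two right-hand sides differ only by the $N$-dependent constant $\Gamma(N+2)/\Gamma(N-k+1)$, so the leading part of ${_{-1}D_x^{\alpha}}(u-u_{NL})$ is a slowly varying multiple of ${_{-1}I_x^{k+1-\alpha}}P^{k+1,0}_{N-k}(x)$. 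Equivalently, at the heuristic level one may freeze the factor $u^{(N+1)}(\xi_x)/(N+1)!$ in $u-u_{NL}=\frac{u^{(N+1)}(\xi_x)}{(N+1)!}\,\omega_{N+1}(x)$ and apply ${_{-1}D_x^{\alpha}}$ termwise to reach the same conclusion.

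Since ${_{-1}I_x^{k+1-\alpha}}P^{k+1,0}_{N-k}(x)$ is proportional to $(1+x)^{k+1-\alpha}P^{\alpha,k+1-\alpha}_{N-k}(x)$, it has exactly $N-k$ zeros in the open interval $(-1,1)$, namely the zeros of the classical Jacobi polynomial $P^{\alpha,k+1-\alpha}_{N-k}$ (both indices exceed $-1$), the factor $(1+x)^{k+1-\alpha}$ contributing only $x=-1$, which is excluded. Taking $\{{_L\xi_i^{\alpha}}\}_{i=1}^{N-k}$ to be these zeros, the leading part of the error is annihilated there while the remainder is of strictly lower order in $N$; this is the asserted superconvergence \eqref{superconrllz}, and the count $N-k$ matches the number of interpolation conditions imposed away from $x=-1$ — the fact used later for the modified collocation method. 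The right Riemann--Liouville statement \eqref{superconrlrz} then follows from the reflection $x\mapsto-x$, which interchanges ${_{-1}D_x^{\alpha}}\leftrightarrow{_{x}D_1^{\alpha}}$, $u_{NL}\leftrightarrow u_{NR}$, and $P^{k+1,0}_{N-k}\leftrightarrow P^{0,k+1}_{N-k}$.

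The step I expect to be the main obstacle is the honest bookkeeping of the remainder after ${_{-1}I_x^{k+1-\alpha}}$ is applied. In the integer-order case the non-leading terms of \eqref{thm1p1} are cleanly $O(N^{-2})$ smaller; here ${_{-1}I_x^{k+1-\alpha}}$ does not commute with the $(z-x)^{-1}$ factor carried by the contour kernel, so a fractional Leibniz expansion produces a ``leakage'' correction built from ${_{-1}I_x^{k+2-\alpha}}P^{k+1,0}_{N-k}$ (plus a tail $\sum_{j\ge1}$ whose absolute convergence on $\mathcal{E}_{\rho}$ needs $\mathcal{D}_{\rho}>2$), and this correction is only $O(N^{-1})$ smaller than the leading part rather than $O(N^{-2})$. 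Consequently the guaranteed gain at $\{{_L\xi_i^{\alpha}}\}$ degrades to one order in $N$; I would therefore present the location of the superconvergence points as above and confirm the predicted $O(N^{-1})$ gain numerically in Section~6.
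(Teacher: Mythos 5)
Your proposal is correct and follows essentially the same route as the paper: both extract, from the contour-integral error representation via a fractional Leibniz-type expansion, the leading term of ${_{-1}D_x^{\alpha}}(u-u_{NL})$ as a constant multiple of ${_{-1}I_x^{k+1-\alpha}}P^{k+1,0}_{N-k}(x)$ and take its zeros as the superconvergence points, with the remainder treated only at the leading-order level (the paper defers this to \cite{Zhao+2016+SISC} exactly where you flag the ``leakage'' terms and the $O(N^{-1})$ gain, consistent with its numerics). Your explicit identification ${_{-1}I_x^{k+1-\alpha}}P^{k+1,0}_{N-k}(x)\propto(1+x)^{k+1-\alpha}P^{\alpha,k+1-\alpha}_{N-k}(x)$, the resulting count of $N-k$ zeros in $(-1,1)$, and the reflection argument for the right-sided case are correct additions the paper does not state, since it instead locates these zeros numerically via the eigenvalue method of Theorem \ref{thm4}.
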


\begin{proof} Again, we only give the proof of (\ref{superconrllz}). The proof starts with (\ref{lemma2332}). Similar to (\ref{differror}) for $\forall x \in[-1,1]$, we have
\begin{eqnarray}
&&_{-1}D_x^{\alpha} (u(x)-u_N(x)) \nonumber \\
&=&\frac{1}{2\pi i} \oint_{\mathcal{E}_\rho} {_{-1}D_x^{\alpha}} \Big( \frac{\omega_{N+1}(x)}{z-x} \Big)\frac{u(z)}{\omega_{N+1}(z)} dz \nonumber \\
&=&\frac{1}{2 \pi i} \oint_{\mathcal{E}_\rho}  \sum_{m=0}^{\infty} \frac{\Gamma(\alpha+1)}{\Gamma(\alpha-m+1)} \frac{_{-1} D_x^{\alpha-m} \mathcal{J}^{0,-(k+1)}_{N+1}(x)} {(z-x) ^{(m+1)}}\frac{u(z)}{\mathcal{J}^{0,-(k+1)}_{N+1}(z)} dz.
\end{eqnarray}
According to the analysis in \cite{Zhao+2016+SISC}, the decay of the error is dominated by the leading term. By (\ref{def11}), (\ref{lemma242}), and (\ref{lemma243}), we arrive at
\begin{eqnarray}
&& \oint_{\mathcal{E}_\rho} \frac{_{-1} D_x^{\alpha} \mathcal{J}^{0,-(k+1)}_{N+1}(x)} {(z-x) }\frac{u(z)}{\mathcal{J}^{0,-(k+1)}_{N+1}(z)} dz \\
&=& \oint_{\mathcal{E}_\rho} \frac{_{-1} I_x^{k+1-\alpha} P^{k+1,0}_{N-k}(x)} {(z-x) }\frac{u(z)}{\mathcal{J}^{0,-(k+1)}_{N+1}(z)} dz.
\end{eqnarray}
When $x={_L\xi_i^{\alpha}}, i=1,\cdots,N-k$, the leading term vanishes, and the remaining terms have higher-order convergence rates.
\end{proof}
\\

\subsection{Calculating the Superconvergence Points}
The recurrence formulas of calculating $_{-1} I_x^{\gamma} P^{a,b}_{n}(x)$ and $_{x} I_1^{\gamma} P^{a,b}_{n}(x)$ are provided in (20) and (23) respectively in \cite{Zeng+Li+arxiv}. Therefore, we can calculate the zero points of $_{-1} I_x^{k+1-\alpha} P^{k+1,0}_{N-k}(x)$ and $_x I_1^{k+1-\alpha} P^{0,k+1}_{N-k}(x)$ by eigenvalue method.

\begin{theorem}
The zeros $\{_L\xi_i \}^n_{i=1}$ of the function $_{-1} I_x^{\gamma} P^{a,b}_{n}(x)$ are eigenvalues of the following matrix $S_L \in \mathcal{M}_{n\times n}(\mathbb{R})$:
\begin{equation}
\left(
\begin{array}{cccccc}
S_{L11} & S_{L12} &  & \cdots &  & 0 \\
S_{L21} & S_{L22} & S_{L23} &  &  & \\
S_{L31} & S_{L32} & S_{L33} & S_{L34} & & \vdots  \\
S_{L41} & 0 & S_{L43} & S_{L44} & \ddots &  \\
 \vdots &  \vdots & 0 & \ddots & \ddots & S_{Ln-1,n} \\
S_{Ln1} & 0 & \cdots & 0 & S_{Ln,n-1} &S_{Lnn}
\end{array} \right),
\end{equation}
where
\begin{eqnarray*}
&& S_{L11}=\frac{b-a+2\gamma+2b\gamma}{a+b+2};   \ \ S_{Lkk}=\frac{A^2_k}{A^1_k}, k=2,\cdots, n;  \\
&& S_{L12}=\frac{2(1+\gamma)}{a+b+2};   \ \ \ S_{Lk,k+1}=\frac{1}{A^1_k},  k=2,\cdots,n-1;   \\
&& S_{L21}=\frac{A^3_2-A^{L4}_2}{A^1_2}; \ \ S_{Lk+1,k}=\frac{A^3_{k+1}}{A^1_{k+1}}, k=2,\cdots, n-1; \\
&& S_{Lk,1}=-\frac{A^{L4}_k}{A^1_k}, k=3,\cdots, n.
\end{eqnarray*}
The zeros $\{_R\xi_i \}^n_{i=1}$ of the function $_x I_1^{\gamma} P^{a,b}_{n}(x)$ are eigenvalues of the matrix $S_R \in \mathcal{M}_{n\times n}(\mathbb{R})$, which is of the same form as $S_L$, where
\begin{eqnarray*}
&& S_{R11}=\frac{b-a-2\gamma-2a\gamma}{a+b+2};   \ \ S_{Rkk}=\frac{A^2_k}{A^1_k}, k=2,\cdots, n;  \\
&& S_{R12}=\frac{2(1+\gamma)}{a+b+2};   \ \ \ S_{Rk,k+1}=\frac{1}{A^1_k},  k=2,\cdots,n-1;   \\
&& S_{R21}=\frac{A^3_2-A^{R4}_2}{A^1_2}; \ \ S_{Rk+1,k}=\frac{A^3_{k+1}}{A^1_{k+1}}, k=2,\cdots, n-1; \\
&& S_{Rk,1}=-\frac{A^{R4}_k}{A^1_k}, k=3,\cdots, n,
\end{eqnarray*}
when $2 \leqslant k \leqslant n$,
\begin{eqnarray*}
&& A^1_k=\frac{\tilde{a}_k}{1+\gamma \tilde{a}_k \bar{c}_k}, \
A^2_k=\frac{\tilde{b}_k+\gamma \tilde{a}_k\bar{b}_k}{1+\gamma \tilde{a}_k \bar{c}_k}, \ A^3_k=\frac{\tilde{c}_k+\gamma \tilde{a}_k \bar{a}_k}{1+\gamma \tilde{a}_k \bar{c}_k}, \\
&& A^{L4}_k=\frac{\gamma \tilde{a}_k(\bar{a}_k d^{L1}_k + \bar{b}_k d^{L2}_k + \bar{c}_k d^{L3}_k)}{(1+\gamma \tilde{a}_k \bar{c}_k) }, \ A^{R4}_k=\frac{\gamma \tilde{a}_k(\bar{a}_k d^{R1}_k + \bar{b}_k d^{R2}_k + \bar{c}_k d^{R3}_k)}{(1+\gamma \tilde{a}_k \bar{c}_k) },
\end{eqnarray*}
where
\begin{eqnarray*}
&& \tilde{a}_k=\frac{(2k-1+a+b)(2k+a+b)}{2k(k+a+b)}, \tilde{b}_k=\frac{(b^2-a^2)(2k-1+a+b)}{2k(k+a+b)(2k-2+a+b)},\\
&& \tilde{c}_k=\frac{(k-1+a)(k-1+b)(2k+a+b)}{k(k+a+b)(2k-2+a+b)},\\
&& \bar{a}_k=\frac{-2(k-1+a)(k-1+b)}{(k-1+a+b)(2k-2+a+b)(2k-1+a+b)},\\
&& \bar{b}_k=\frac{2(a-b)}{(2k-2+a+b)(2k+a+b)}, \bar{c}_k=\frac{2(k+a+b)}{(2k-1+a+b)(2k+a+b)},\\
&& d^{L1}_k=(-1)^{k}\binom{k-2+b}{k-2}, \ d^{L2}_k=(-1)^{k-1}\binom{k-1+b}{k-1}, \ d^{L3}_k=(-1)^k \binom{k+b}{k},\\
&& d^{R1}_k=\binom{k-2+a}{k-2}, \ d^{R2}_k=\binom{k-1+a}{k-1}, \ d^{R3}_k= \binom{k+a}{k}.\\
\end{eqnarray*}

\label{thm4}
\end{theorem}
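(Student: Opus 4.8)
The plan is to realize $S_L$ as a generalized companion matrix for the polynomial factor of ${}_{-1}I_x^{\gamma}P^{a,b}_n$. The structural fact that makes this possible is that ${}_{-1}I_x^{\gamma}$ maps a polynomial of degree $m$ to $(1+x)^{\gamma}$ times a polynomial of degree $m$, since ${}_{-1}I_x^{\gamma}[(1+x)^j]=\frac{\Gamma(j+1)}{\Gamma(\gamma+j+1)}(1+x)^{\gamma+j}$. Hence, writing $q_k(x):={}_{-1}I_x^{\gamma}P^{a,b}_k(x)$, we have $q_k(x)=(1+x)^{\gamma}p_k(x)$ with $p_k\in\mathcal{P}_k$ of exact degree $k$, and $p_0\equiv 1/\Gamma(\gamma+1)$; in particular $\{p_0,\dots,p_{n-1}\}$ is a basis of $\mathcal{P}_{n-1}$ and the zeros ${}_L\xi_i$ appearing in the statement are precisely the $n$ zeros of $p_n$. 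I would therefore look for a representation of ``multiplication by $x$'' on this basis whose overflow into the top degree is controlled exactly by $p_n$.

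That representation is the recurrence for ${}_{-1}I_x^{\gamma}P^{a,b}_n$ recorded in (20) of \cite{Zeng+Li+arxiv}, which is itself built from three ingredients: the classical Jacobi three-term recurrence, whose coefficients are the $\tilde a_k,\tilde b_k,\tilde c_k$; the antiderivative structure relation $\int_{-1}^xP^{a,b}_{k-1}(s)\,ds=\bar c_kP^{a,b}_k(x)+\bar b_kP^{a,b}_{k-1}(x)+\bar a_kP^{a,b}_{k-2}(x)+\mathrm{const}$, whose constant is fixed by the boundary values $P^{a,b}_m(-1)=(-1)^m\binom{m+b}{m}$ and generates the numbers $d^{L1}_k,d^{L2}_k,d^{L3}_k$; and the fractional commutator identity ${}_{-1}I_x^{\gamma}(t\,f(t))=x\,{}_{-1}I_x^{\gamma}f(x)-\gamma\,{}_{-1}I_x^{\gamma+1}f(x)$ combined with the semigroup law ${}_{-1}I_x^{\gamma+1}={}_{-1}I_x^{\gamma}\circ{}_{-1}I_x^{1}$, which converts ${}_{-1}I_x^{\gamma+1}P^{a,b}_{k-1}$ back into a combination of $q_k,q_{k-1},q_{k-2}$ and $q_0={}_{-1}I_x^{\gamma}P^{a,b}_0=(1+x)^{\gamma}/\Gamma(\gamma+1)$. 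Rearranging that recurrence (legitimate because $A^1_k=\tilde a_k/(1+\gamma\tilde a_k\bar c_k)\neq 0$ for the parameter ranges occurring here) into $x\,q_{k-1}=\frac{1}{A^1_k}q_k+\frac{A^2_k}{A^1_k}q_{k-1}+\frac{A^3_k}{A^1_k}q_{k-2}-\frac{A^{L4}_k}{A^1_k}q_0$ for $k=3,\dots,n$ reads off the generic rows of $S_L$; for $k=2$ the $q_{k-2}$ and $q_0$ terms coincide and merge into $S_{L21}=(A^3_2-A^{L4}_2)/A^1_2$; and for $k=1$ one computes $x\,q_0$ directly from $P^{a,b}_0\equiv 1$, $P^{a,b}_1(x)=\frac{a+b+2}{2}x+\frac{a-b}{2}$ and the commutator identity, obtaining $x\,q_0=\frac{2(1+\gamma)}{a+b+2}\,q_1+\frac{b-a+2\gamma+2b\gamma}{a+b+2}\,q_0$, which is exactly the top row $S_{L11},S_{L12}$.

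With every row identified, I would divide through by $(1+x)^{\gamma}$ and stack: with $\mathbf{p}(x)=(p_0(x),\dots,p_{n-1}(x))^{\top}$ the identities collapse to the single polynomial-vector identity
\begin{equation*}
x\,\mathbf{p}(x)=S_L\,\mathbf{p}(x)+\frac{1}{A^1_n}\,p_n(x)\,\mathbf{e}_n ,
\end{equation*}
valid for $x\in(-1,1]$ and, since both sides are polynomials, for all $x$. At any zero $\xi$ of $p_n$ the last term vanishes, so $S_L\,\mathbf{p}(\xi)=\xi\,\mathbf{p}(\xi)$, and since the leading component $p_0(\xi)=1/\Gamma(\gamma+1)\neq 0$ the eigenvector $\mathbf{p}(\xi)$ is nonzero; thus $\xi$ is an eigenvalue of $S_L$. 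As $\deg p_n=n$, this accounts for all $n$ of the zeros. The claim for $S_R$ is proved identically after three substitutions: the commutator identity becomes ${}_{x}I_1^{\gamma}(t\,f(t))=x\,{}_{x}I_1^{\gamma}f(x)+\gamma\,{}_{x}I_1^{\gamma+1}f(x)$ (opposite sign); the boundary values are taken at $+1$, $P^{a,b}_m(1)=\binom{m+a}{m}$, producing $d^{R1}_k,d^{R2}_k,d^{R3}_k$; and $q_0={}_{x}I_1^{\gamma}P^{a,b}_0=(1-x)^{\gamma}/\Gamma(\gamma+1)$ feeds the top row to give $S_{R11}$.

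The conceptual content — the companion-matrix identity and the eigenvalue deduction — is short; the real labour, and the step I expect to be the main obstacle, is the coefficient matching in the second paragraph: performing the rearrangement of the \cite{Zeng+Li+arxiv} recurrence and verifying it block by block against $A^1_k,A^2_k,A^3_k,A^{L4}_k$, in particular tracking the boundary constants produced both by $\int_{-1}^xP^{a,b}_{k-1}$ and by the fractional commutator, and disposing by hand of the two exceptional rows $k=1$ and $k=2$. The first and third paragraphs, together with the right-endpoint variant, are then routine.
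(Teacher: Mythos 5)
Your proposal is correct and takes essentially the same route as the paper: the paper's proof simply quotes the recurrences (20) and (23) of Zeng--Li for ${}_{-1}I_x^{\gamma}P^{a,b}_{n}$ and ${}_{x}I_1^{\gamma}P^{a,b}_{n}$ and then appeals to the standard companion-matrix eigenvalue argument (Theorem 3.4 of Shen--Tang--Wang), which is exactly the rearrangement into $x\,q_{k-1}=\frac{1}{A^1_k}q_k+\frac{A^2_k}{A^1_k}q_{k-1}+\frac{A^3_k}{A^1_k}q_{k-2}-\frac{A^{L4}_k}{A^1_k}q_0$, the special handling of the rows $k=1,2$, and the stacking that you carry out. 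Your additional material on how the Zeng--Li recurrence is itself derived and the explicit factorization $q_k=(1+x)^{\gamma}p_k$ merely fills in details the paper delegates to its citations.
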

\begin{proof}
The recurrence formula for left integral of Jacobi polynomial, $_{-1} I_x^{\gamma} P^{a,b}_{n}(x)$ is given by, seeing \cite{Zeng+Li+arxiv} (20):
\begin{eqnarray}
\left\{\begin{array}{lll}
{_{-1} I_x^{\gamma} P^{a,b}_{0}(x)}&=&\frac{(1+x)^{\gamma}}{\Gamma(\gamma+1)}, \\
{_{-1} I_x^{\gamma} P^{a,b}_{1}(x)}&=&\frac{a+b+2}{2}(x [ {_{-1} I_x^{\gamma} P^{a,b}_{0}(x)}]-\frac{\gamma (1+x)^{\gamma+1}}{\Gamma(\gamma+2)})\\
&&+\frac{a-b}{2} [  {_{-1} I_x^{\gamma} P^{a,b}_{0}(x)}],\\
{_{-1} I_x^{\gamma} P^{a,b}_{k+1}(x)} &=& (A^1_{k+1} x-A^2_{k+1}) [ {_{-1} I_x^{\gamma} P^{a,b}_{k}(x)}] -A^3_{k+1} [{_{-1} I_x^{\gamma} P^{a,b}_{k-1}(x)}] \\
&&+A^{L4}_{k+1} [ {_{-1} I_x^{\gamma} P^{a,b}_{0}(x)}], \ \ \ k=1,\cdots, n-1,
\end{array}\right.
\end{eqnarray}
and the formula for right integral, $_x I_1^{\gamma} P^{a,b}_{n}(x)$, is given by, seeing \cite{Zeng+Li+arxiv} (23):
\begin{eqnarray}
\left\{\begin{array}{lll}
{_x I_1^{\gamma} P^{a,b}_{0}(x)}&=&\frac{(1-x)^{\gamma}}{\Gamma(\gamma+1)}, \\
{_x I_1^{\gamma} P^{a,b}_{1}(x)}&=&\frac{a+b+2}{2}(x [ {_x I_1^{\gamma} P^{a,b}_{0}(x)}]+\frac{\gamma (1-x)^{\gamma+1}}{\Gamma(\gamma+2)})\\
&&+\frac{a-b}{2} [  {_x I_1^{\gamma} P^{a,b}_{0}(x)}],\\
{_{-1} I_x^{\gamma} P^{a,b}_{k+1}(x)} &=& (A^1_{k+1} x-A^2_{k+1}) [ {_x I_1^{\gamma} P^{a,b}_{k}(x)}] -A^3_{k+1} [{_x I_1^{\gamma} P^{a,b}_{k-1}(x)}] \\
&&+A^{R4}_{k+1} [ {_x I_1^{\gamma} P^{a,b}_{0}(x)}], \ \ \ k=1,\cdots, n-1,
\end{array}\right.
\end{eqnarray}
where $A^1_k, A^2_k, A^3_k, A^{L4}_k, A^{R4}_k$, $k=1,2,\cdots, n-1$, are given in the statement of the theorem.  The rest of proof is parallel to the Theorem 3.4 in \cite{shen+wang+tang+spectral}.
\end{proof}

\section{Hermite Interpolation for Riesz Derivatives with Arbitrary Positive Order}
Parallel to the previous section, there are similar conclusions for Riesz fractional derivatives.

\begin{theorem}
Let $k<\alpha <k+1$, where $k$ is a nonnegative integer. Let $u_{NB}$ be the interpolant defined in (\ref{def33}) respectively. Then we have:
\begin{eqnarray}
\Vert {^RD^{\alpha}}(u-u_{NL}) \Vert_{L^{\infty}[-1,1]} \leqslant \frac{2C_R}{\Gamma(k+2-\alpha)} \Vert (u-u_{NL})^{(k+1)}\Vert_{L^{\infty}[-1,1]}, \label{thm41}
\end{eqnarray}
where $C_R=\max\{c1,c2\}$, both $c1,c2$ are defined in (\ref{def22}).
\label{thm5}
\end{theorem}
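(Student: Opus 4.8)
The plan is to reduce the Riesz estimate to the two one-sided Riemann--Liouville estimates already established in Theorem~\ref{thm3}, gluing them together by the triangle inequality and using that the two-point interpolant $u_{NB}$ matches $u$ together with its first $k$ derivatives at \emph{both} endpoints. (I note in passing that ``$u_{NL}$'' in the statement of Theorem~\ref{thm5} should read ``$u_{NB}$''.) First I would identify the active branch of the definition (\ref{def22}): since $\alpha\in(k,k+1)$, the Riesz derivative is built with $k+1$ as the ceiling index, so
\[
{^RD^{\alpha}}(u-u_{NB})=c\,\bigl({_{-1}D_x^{\alpha}}+{_{x}D_1^{\alpha}}\bigr)(u-u_{NB}),
\]
where $c=c_1$ if $k$ is even and $c=c_2$ if $k$ is odd. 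Since $\gamma:=k+1-\alpha\in(0,1)$, both $c_1=\frac{1}{2\sin(\pi\gamma/2)}$ and $c_2=\frac{1}{2\cos(\pi\gamma/2)}$ are positive, so in either case $0<c\leqslant C_R=\max\{c_1,c_2\}$.

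Next I would bound each one-sided piece pointwise, reusing verbatim the computation behind (\ref{thm2p}). The interpolation conditions (\ref{def33}) give $(u-u_{NB})^{(l)}(\pm1)=0$ for $l=0,\dots,k$, so in the expansions (\ref{def11}) and (\ref{def12}) (taken with $k$ replaced by $k+1$) all endpoint power terms drop out, and ${_{-1}D_x^{\alpha}}(u-u_{NB})$ reduces to $\frac{1}{\Gamma(k+1-\alpha)}\int_{-1}^{x}(x-t)^{k-\alpha}(u-u_{NB})^{(k+1)}(t)\,dt$, with the mirror statement for ${_{x}D_1^{\alpha}}$ over $[x,1]$. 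H\"older's inequality then yields, for every $x\in[-1,1]$,
\[
\bigl|{_{-1}D_x^{\alpha}}(u-u_{NB})(x)\bigr|\leqslant\frac{(1+x)^{k+1-\alpha}}{\Gamma(k+2-\alpha)}\Vert(u-u_{NB})^{(k+1)}\Vert_{L^{\infty}[-1,1]},
\]
\[
\bigl|{_{x}D_1^{\alpha}}(u-u_{NB})(x)\bigr|\leqslant\frac{(1-x)^{k+1-\alpha}}{\Gamma(k+2-\alpha)}\Vert(u-u_{NB})^{(k+1)}\Vert_{L^{\infty}[-1,1]}.
\]

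Adding these, inserting $|c|\leqslant C_R$, and bounding the common factor uniformly in $x$ will finish the proof: with $s=k+1-\alpha\in(0,1)$ and $(1+x)+(1-x)=2$, concavity of $t\mapsto t^{s}$ (Jensen's inequality) gives $\frac12\bigl[(1+x)^{s}+(1-x)^{s}\bigr]\leqslant1$, hence
\[
\bigl|{^RD^{\alpha}}(u-u_{NB})(x)\bigr|\leqslant\frac{2C_R}{\Gamma(k+2-\alpha)}\Vert(u-u_{NB})^{(k+1)}\Vert_{L^{\infty}[-1,1]},\qquad x\in[-1,1],
\]
and taking the supremum over $x$ gives (\ref{thm41}).

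There is no real obstacle here; the argument is essentially two invocations of Theorem~\ref{thm3} combined additively. The points that need care are all bookkeeping: checking that for $\alpha\in(k,k+1)$ the active branch of (\ref{def22}) contributes a coefficient no larger than $C_R$; verifying that \emph{every} endpoint boundary term in both Riemann--Liouville expansions vanishes for $u_{NB}$ (this is exactly where the two-point Hermite structure is needed, and where a Lagrange-type interpolant would fail at $x=\pm1$); and extracting the sharp constant $2$ rather than $2^{\,k+2-\alpha}$ from the bound on $(1+x)^{k+1-\alpha}+(1-x)^{k+1-\alpha}$.
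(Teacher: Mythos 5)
Your proposal is correct and follows essentially the same route as the paper: the paper also treats (\ref{thm41}) as a direct consequence of the splitting in (\ref{def22}), the pointwise bound (\ref{thm2p}) applied at both endpoints (valid for $u_{NB}$ since $(u-u_{NB})^{(l)}(\pm1)=0$ for $l=0,\dots,k$), and the elementary fact that $\max_{-1\leqslant x\leqslant 1}\bigl[(1+x)^{\gamma}+(1-x)^{\gamma}\bigr]=2$ for $\gamma\in(0,1)$, which is exactly your Jensen step. Your remark that $u_{NL}$ in the statement should read $u_{NB}$, and your bookkeeping of the active branch of (\ref{def22}) giving a coefficient at most $C_R$, are both consistent with the paper's (much terser) argument.
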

\begin{proof}
It is a direct corollary of (\ref{def22}), (\ref{thm2p}) and the fact that:
\begin{eqnarray*}
\max_{-1\leqslant x \leqslant 1} \{ (1+x)^{\gamma}+(1-x)^{\gamma} \}=2, \ \forall \gamma \in (0,1).
\end{eqnarray*}
\end{proof}

\begin{remark}
Theorem \ref{thm3} and Theorem \ref{thm5} state that if the (k+1)-th derivative of $(u-u_N)$ converges, then the $\alpha$-th derivative of $(u-u_N)$ 
must converge with at least the same convergence rate.
\end{remark}

The superconvergence points are located by the following theorem.

\begin{theorem}
Let $\alpha \in (k,k+1)$, let $u_{NB}$ be the interpolant defined in (\ref{def33}). The $\alpha$-th Riesz fractional derivative superconverges at $\{ _B\xi_i^{\alpha} \}$, which satisfies
\begin{eqnarray}
^RD^{\alpha} \mathcal{J}^{-(k+1),-(k+1)}_{N+1}(_B\xi_i^{\alpha})=0,\ i=1,\cdots,N-1. 
\label{superconriz}
\end{eqnarray}
\label{thm7}
\end{theorem}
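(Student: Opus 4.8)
The plan is to carry out the argument of Theorem~\ref{thm6} with the Riesz operator in place of the one-sided Riemann--Liouville operator. I would start from the contour error formula \eqref{lemma2332}, choosing $\omega_{N+1}(x)$ proportional to $\mathcal{J}^{-(k+1),-(k+1)}_{N+1}(x)$ (the leading coefficient is immaterial, since it cancels against $\omega_{N+1}(z)$ in the denominator). Since $\alpha\in(k,k+1)$, \eqref{def22} gives ${^RD^{\alpha}}=C_{k,\alpha}\big({_{-1}D_x^{\alpha}}+{_{x}D_1^{\alpha}}\big)$ with a nonzero constant $C_{k,\alpha}$ (the $c_1$ or $c_2$ of \eqref{def22}, chosen by the parity of $k$). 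Pushing this operator inside the contour integral and expanding each one-sided fractional derivative of $\omega_{N+1}(x)/(z-x)$ by the generalized Leibniz series, exactly as in the proof of Theorem~\ref{thm6}, I would obtain a series $\sum_{m\ge0}$ whose $m=0$ term has numerator $C_{k,\alpha}\big({_{-1}D_x^{\alpha}}+{_{x}D_1^{\alpha}}\big)\mathcal{J}^{-(k+1),-(k+1)}_{N+1}(x)={^RD^{\alpha}}\mathcal{J}^{-(k+1),-(k+1)}_{N+1}(x)$, and whose remaining ($m\ge1$) terms carry extra negative powers of $(z-x)$ and smaller factors in $x$.

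Following \cite{Zhao+2016+SISC}, the decay of the whole expression is dictated by the $m=0$ term, so the superconvergence points are exactly the zeros of its numerator. To make this explicit, I would use that $\mathcal{J}^{-(k+1),-(k+1)}_{N+1}(x)=(1-x^2)^{k+1}P^{k+1,k+1}_{N-2k-1}(x)$ vanishes to order $k+1$ at both $x=\pm1$; hence, as in the reduction leading to \eqref{thm2p}, both one-sided derivatives collapse to fractional integrals of the $(k+1)$-st derivative, and repeated use of \eqref{lemma241} gives $D^{k+1}\mathcal{J}^{-(k+1),-(k+1)}_{N+1}(x)=(-2)^{k+1}\frac{\Gamma(N-k+1)}{\Gamma(N-2k)}P^{0,0}_{N-k}(x)$. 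Combining these,
\[{^RD^{\alpha}}\mathcal{J}^{-(k+1),-(k+1)}_{N+1}(x)=\tilde{C}_{N,k,\alpha}\Big({_{-1}I_x^{k+1-\alpha}}+(-1)^{k+1}{_{x}I_1^{k+1-\alpha}}\Big)P^{0,0}_{N-k}(x)\]
with $\tilde{C}_{N,k,\alpha}\ne0$, i.e.\ a nonzero multiple of the even (if $k+1$ is even) or odd (if $k+1$ is odd) Riesz potential of order $k+1-\alpha$ of the Legendre polynomial $P^{0,0}_{N-k}$. Thus at each zero ${_B\xi_i^{\alpha}}$ of the right-hand side the $m=0$ term vanishes, and ${^RD^{\alpha}}(u-u_{NB})$ is left with the higher-order tail, which is the superconvergence asserted in \eqref{superconriz}.

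I expect the two delicate points to be the following. First, one must verify that the $m\ge1$ tail is genuinely of smaller order than the $m=0$ term \emph{uniformly} on $[-1,1]$: this needs $L^{\infty}[-1,1]$ bounds for ${_{-1}D_x^{\alpha-m}}\mathcal{J}^{-(k+1),-(k+1)}_{N+1}$ and ${_{x}D_1^{\alpha-m}}\mathcal{J}^{-(k+1),-(k+1)}_{N+1}$, together with the lower bound $\min_{z\in\mathcal{E}_{\rho}}|\mathcal{J}^{-(k+1),-(k+1)}_{N+1}(z)|\gtrsim \tilde{n}^{-1/2}\rho^{\tilde{n}}$ already established in the proof of Theorem~\ref{thm2}, the estimates being handled carefully near $x=\pm1$, where the one-sided operators are singular but the factor $(1\mp x)^{k+1-\alpha}$ keeps everything bounded. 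Second, one must confirm the count: that ${^RD^{\alpha}}\mathcal{J}^{-(k+1),-(k+1)}_{N+1}$ has exactly $N-1$ zeros in $(-1,1)$. For this I would write it, via the recurrences of \cite{Zeng+Li+arxiv}, in the form $(1+x)^{k+1-\alpha}Q_1(x)+(-1)^{k+1}(1-x)^{k+1-\alpha}Q_2(x)$ with $Q_1,Q_2$ polynomials of degree $N-k$, reducing the count to a sign-change/monotonicity analysis of $\big(\tfrac{1+x}{1-x}\big)^{k+1-\alpha}$ against a rational function, cross-checked against the zero-counting for Riesz potentials of Jacobi polynomials in \cite{Zhao+2016+SISC}. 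The second point --- pinning down the exact number $N-1$ and the simplicity of these zeros --- is the part I would expect to need the most work. Once both are in place, the theorem follows.
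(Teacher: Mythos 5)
You take essentially the same route as the paper: its proof of Theorem~\ref{thm7} simply invokes the framework of Theorem~\ref{thm6} (contour error representation, generalized Leibniz expansion, leading-term dominance following \cite{Zhao+2016+SISC}) and defers the details to Theorem~3.1 of \cite{Deng+arxiv}, which is exactly the argument you outline. Your explicit identification of the leading term, writing ${^RD^{\alpha}}\mathcal{J}^{-(k+1),-(k+1)}_{N+1}(x)$ as a nonzero multiple of $\bigl({_{-1}I_x^{k+1-\alpha}}+(-1)^{k+1}{_{x}I_1^{k+1-\alpha}}\bigr)P^{0,0}_{N-k}(x)$, is correct and merely supplies detail (including the zero-count issue, which the paper also leaves to the cited reference) beyond what the paper records.
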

\begin{proof}
The framework of the proof is the same as Theorem \ref{thm6}. The detail is referred to the Theorem 3.1 in \cite{Deng+arxiv}.
\end{proof}

\section{Numerical Examples}
We use examples 1-3 to investigate the superconvergence points for interger-order derivatives, Riemann fractional derivatives, and Riesz fractional derivatives, respectively.  After that, we apply the theoretical results to solve the integer-order/fractional boundary value problems to illustrate the effectiveness of the theoretical analysis and application of superconvergence points.

\subsection{Superconvergence in derivatives of interpolations}
For the convenience of notations, we define $e_N(x):=u(x)-u_N(x)$. \\

\noindent {\bf Example 1.} Set $f(x)=\frac{1}{1+4x^2}$, which is an analytic function with two simple poles $z=\pm \frac{i}{2}$ in the complex plane. Take the degree of interpolation polynomial $N=31$.  Noting that $f(x)$ is even, we apply two-point Hermite interpolations to demonstrate the superconvergence points as shown in Theorem \ref{thm2} with the following two cases.

\begin{figure}[htbp] 
   \centering
    \includegraphics[width=2.5in]{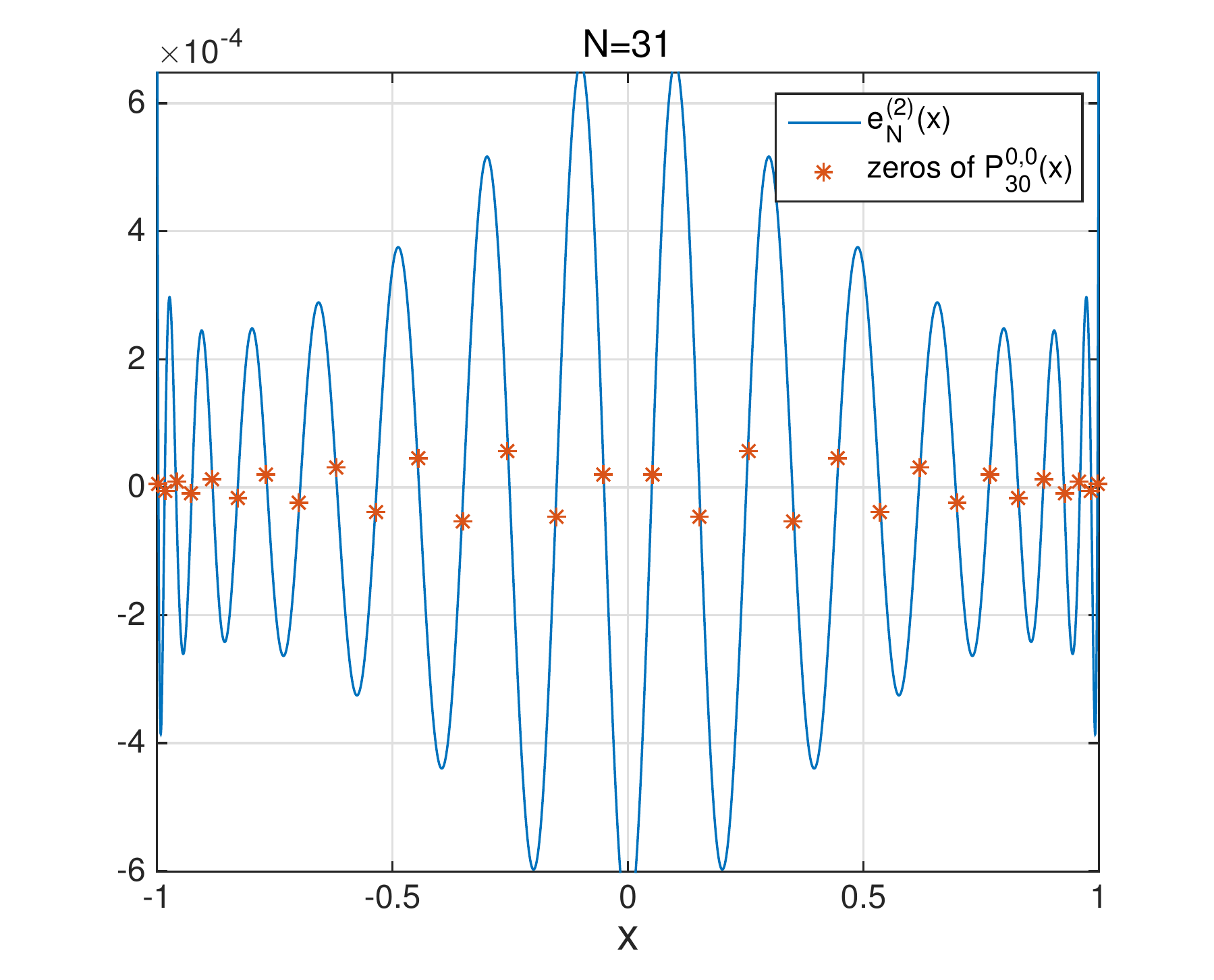}
     \includegraphics[width=2.5in]{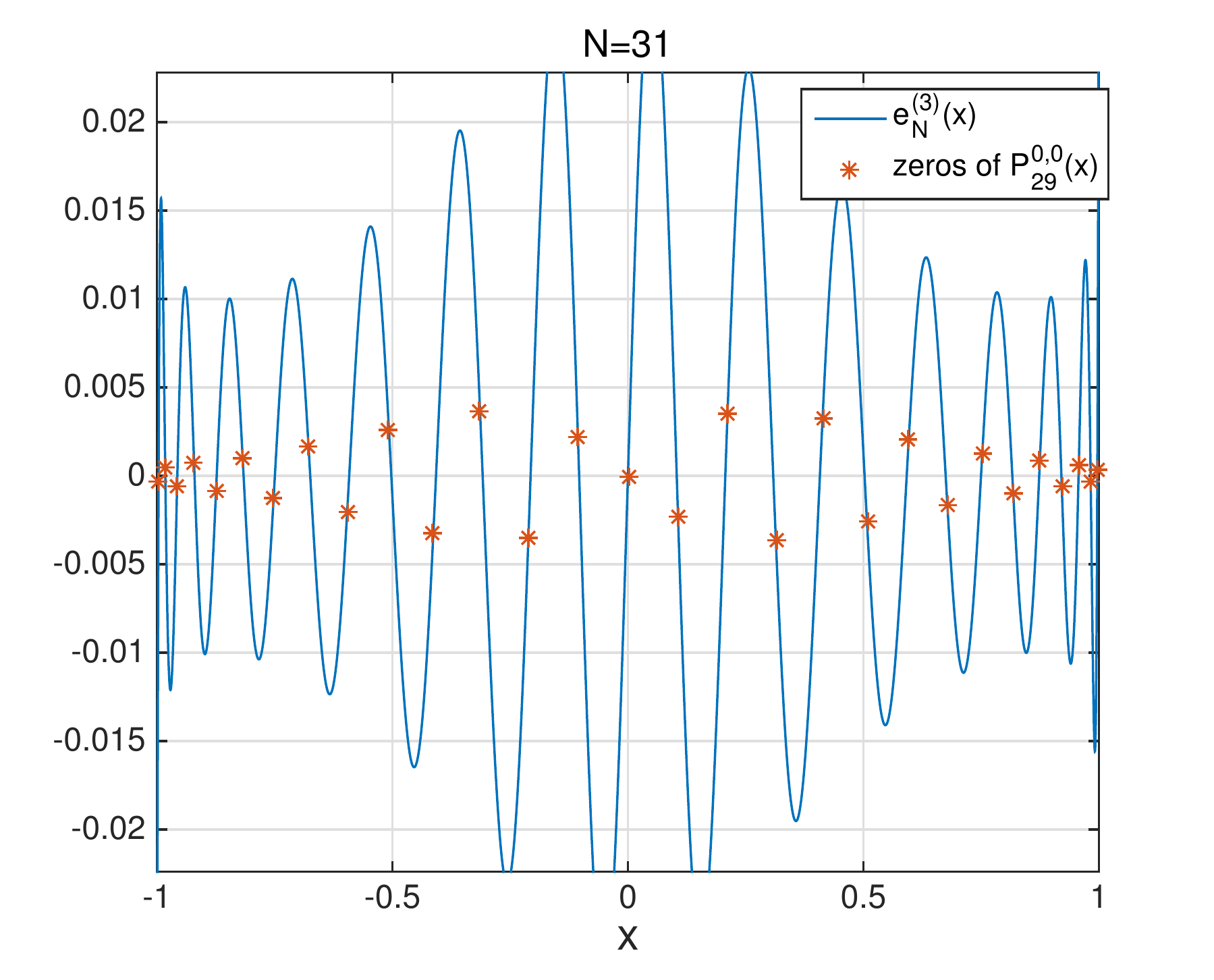}
   \caption{(Example 1) Left: the second derivative of error $e_N^{(2)}(x)$ with $N=31$ and the zeros of $P_{30}^{0,0}(x)$. Right: the third derivative of error $e_N^{(3)}(x)$ with $N=31$ and zeros of $P_{29}^{0,0}(x)$.}
   \label{fg61}
\end{figure}
\noindent {\bf Case 1:}  Assume that the errors satisfy  $e_N(-1)=e_N(1)=e_N'(-1)=e_N'(1)=0$. Left panel of Fig. \ref{fg61} shows that the second derivative of $e_N^{''}(x)$, and  the superconvergence points predicted as the zeros of $P_{30}^{0,0}(x)$ based on Theorem \ref{thm2}.

\noindent {\bf Case 2:} Assume that the errors satisfy $e_N(-1)=e_N(1)=e_N'(-1)=e_N''(1)=e_N''(1)=e_N''(-1)=0$. Right panel of Fig. \ref{fg61} shows the third derivative of $e_N^{'''}(x)$, and  the superconvergence points predicted as the zeros of $P_{29}^{0,0}(x)$ based on Theorem \ref{thm2}. \\

\noindent {\bf Example 2.} This example is used to observe the superconvergence phenomenon for Riemann-Liouville fractional derivatives by taking $f(x)=\frac{1}{10}(1+x)^{6.15}$, $N=18$, and $\alpha=1.23, 1.78, 2.23, 2.78$ respectively. Two cases are also considered.

\noindent {\bf Case  1 ($1<\alpha<2$):} In this situation, we set $e_N(-1)=e_N'(-1)=0$ by adopting the interpolation (\ref{def31}).  Left panel of Fig.  \ref{fg63} shows the errors ${_{-1}D_x^{\alpha}}e_N(x)$ with $\alpha=1.23, 1.78$, and the superconvergence points predicted as zeros of $_{-1} I_x^{2-\alpha} P^{2,0}_{17}(x)$ based on Theorem \ref{thm6}.

\noindent {\bf Case  2 ($2<\alpha<3$):} In this situation, we set  $e_N(-1)=e_N'(-1)=e_N''(-1)=0$ by adopting the interpolation (\ref{def31}) again. Right panel of Fig. \ref{fg63} shows the errors ${_{-1}D_x^{\alpha}}e_N(x)$ with $\alpha=2.23, 2.78$, and the superconvergence points predicted as zeros of $_{-1} I_x^{3-\alpha} P^{3,0}_{16}(x)$ based on Theorem \ref{thm6}.

\begin{figure}[htbp] 
   \centering
    \includegraphics[width=2.5in]{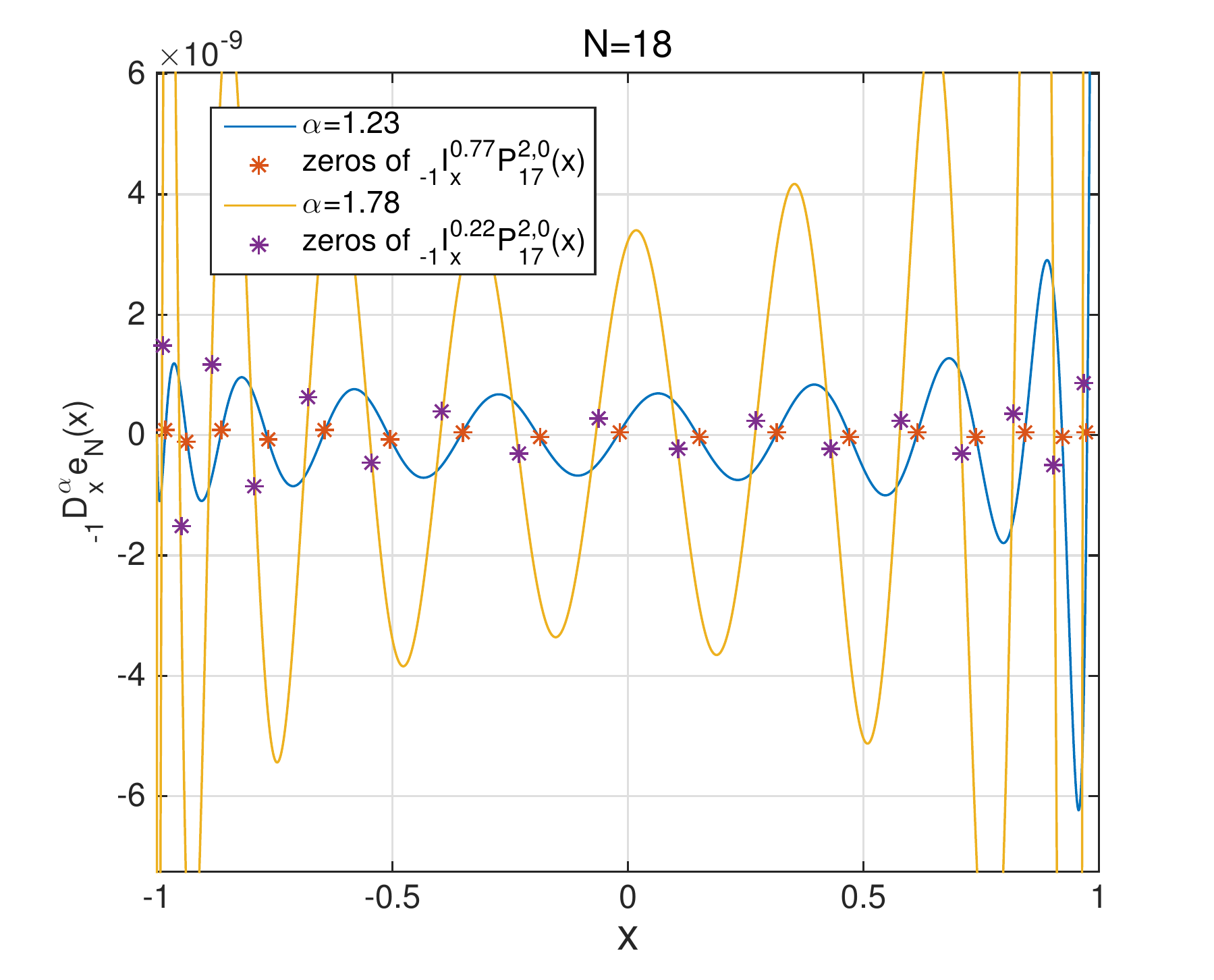}
      \includegraphics[width=2.5in]{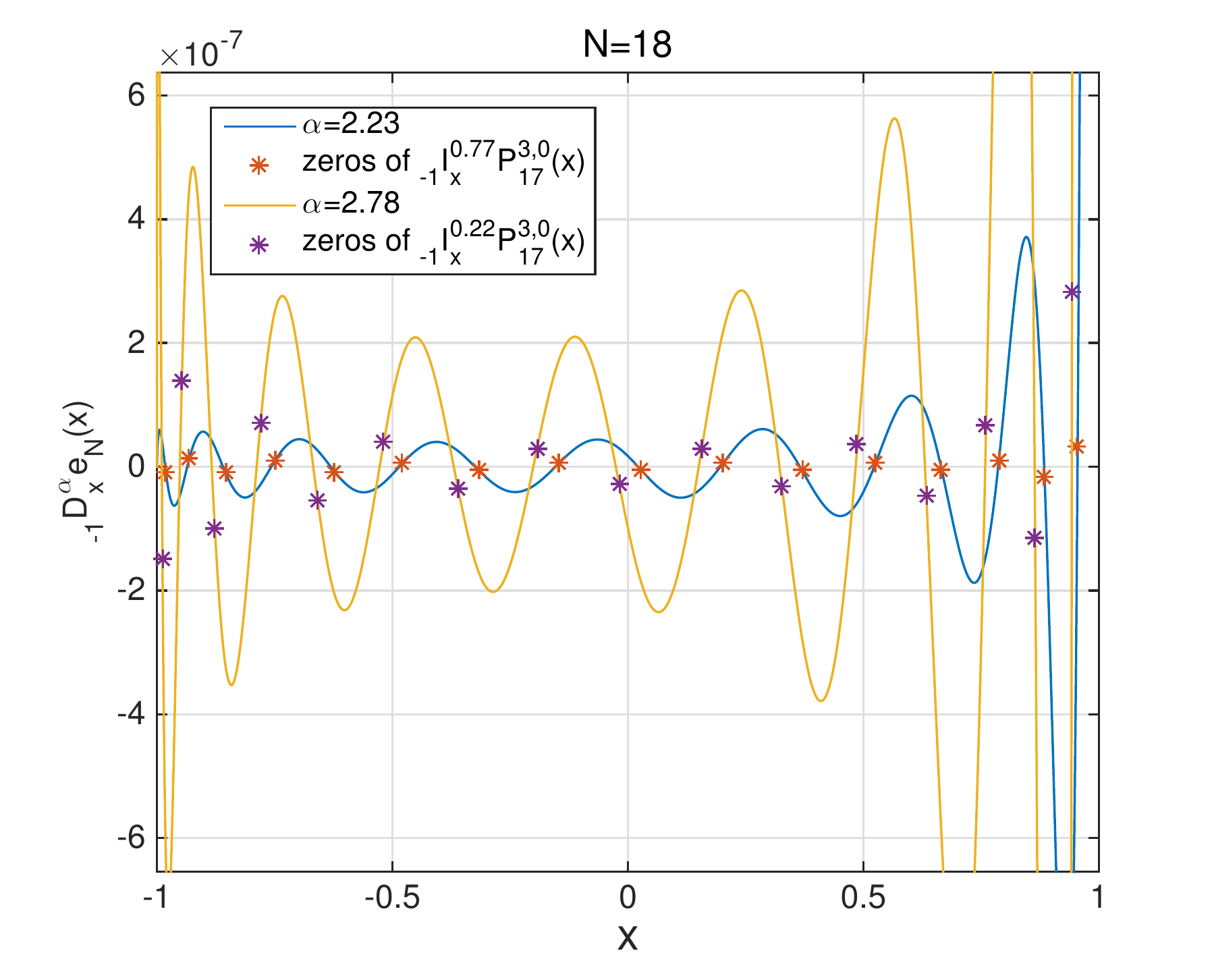}
   \caption{(Example 2) Left: Riemann-Liouville fractional derivative errors ${_{-1}D_x^{\alpha}}e_N(x)$, and the superconvergence points predicted as zeros of $_{-1} I_x^{2-\alpha} P^{2,0}_{17}(x)$ with $N=18$ and $\alpha=1.23, 1.78$. Right: the errors ${_{-1}D_x^{\alpha}}e_N(x)$, and the superconvergence points predicted as zeros of $_{-1} I_x^{3-\alpha} P^{3,0}_{16}(x)$ with $N=18$ and $\alpha=2.23, 2.78$.}
   \label{fg63}
\end{figure}

To further quantify the superconvergence rate, we define the superconvergence ratio as
\begin{eqnarray}
r(N)=\frac{\max_{-1\leqslant x \leqslant 1}{\vert ^R D^{\alpha}(u-u_N)(x) \vert}}{\max_{0\leqslant i \leqslant n}{\vert ^R D^{\alpha}(u-u_N)(\xi_i^{\alpha}) \vert}}.
\label{ratio1}
\end{eqnarray}
According to (\ref{defspcvg}), $r(N)$ is exactly $N^{\beta}$. For any fixed $\alpha,$ we can evaluate the ratio for different $N$ values, and then estimate the gain of rate $\beta$  numerically. As what one can see in Table 6.1, the convergence rates at the superconvergence points are at least $O(N^{-1})$, for different $\alpha$, higher than the global rate, and the gain of convergence rate increases while the derivative order is increasing. \\

\begin{table}[htbp]
   \centering
   \begin{tabular}{@{} cc|cc @{}} 
            \multicolumn{4}{c}{Superconvergence ratios of (\ref{ratio1}) for different $\alpha$th-derivatives.} \\
            \hline
     derivative order ($\alpha$)    & gain of rate ($\beta$) & derivative order ($\alpha$) & gain of rate ($\beta$) \\
     \hline
         1.13         & 1.43     &  2.13 & 1.65\\
     1.33      & 1.51 &  2.33 & 1.66\\
      1.53      & 1.47  & 2.53 & 1.68\\
      1.73 & 1.52   & 2.73 & 1.71\\
      1.93 & 1.52   & 2.93 & 1.73\\
      \hline
    \end{tabular}
         \caption{}
   \label{tab1}
\end{table}

\noindent {\bf Example 3.} This example is used to investigate the superconvergence for Riesz fractional derivatives by taking  $f(x)=(1+x)^9(1-x)^9$, and $\alpha=1.62, 2.44$. The function is interpolated at zeros of $\mathcal{J}^{-2,-2}_{18}(x)$ for $\alpha=1.62$, and at zeros of $\mathcal{J}^{-3,-3}_{18}(x)$ for $\alpha=2.44$ respectively. Similar to previous examples, the errors of ${_{-1}D_x^{\alpha}}e_N(x)$ are shown, and the superconvergence points are highlighted in the Fig. \ref{fg65} respectively.

\begin{figure}[htbp] 
   \centering
   \includegraphics[width=2.5in]{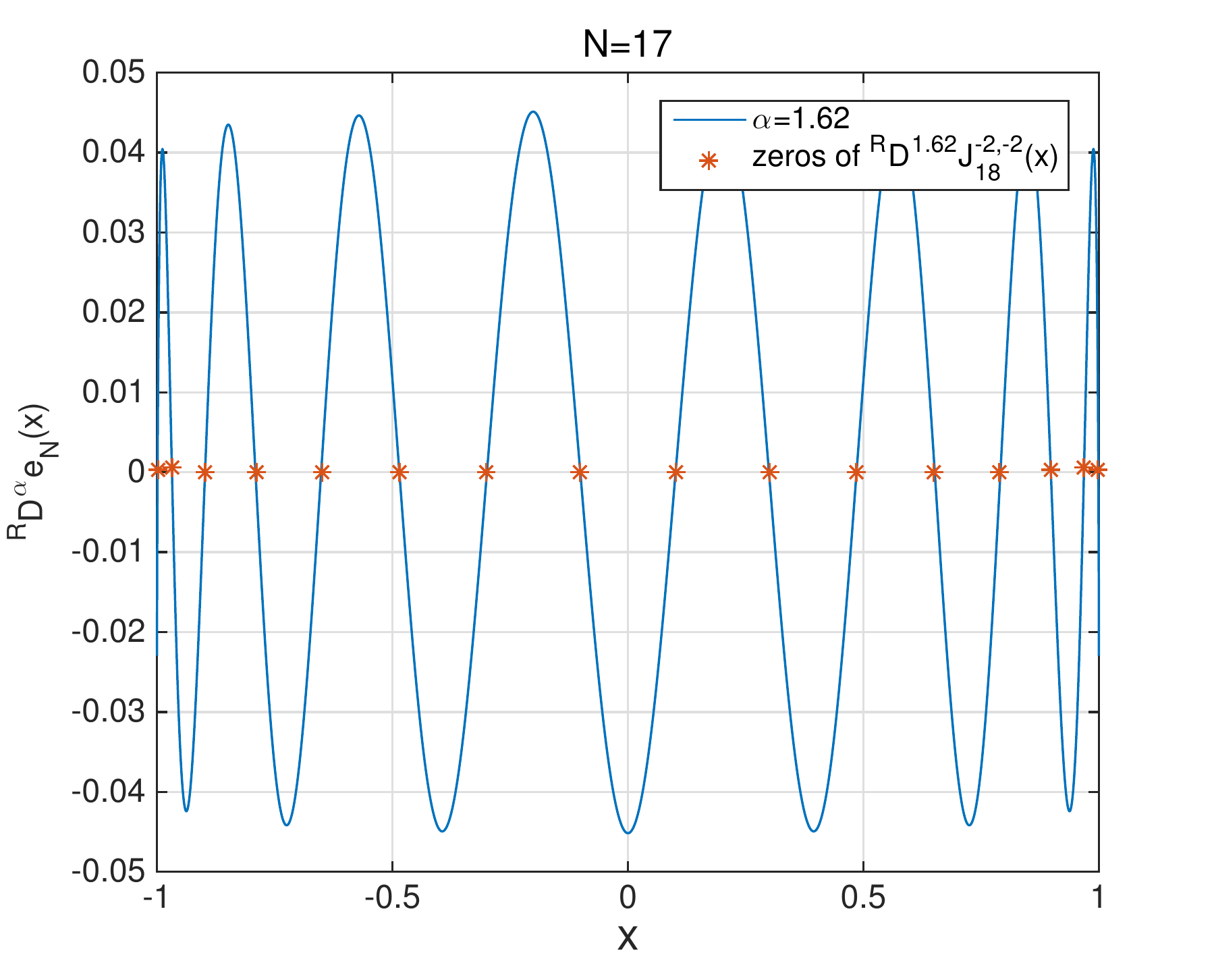}
    \includegraphics[width=2.5in]{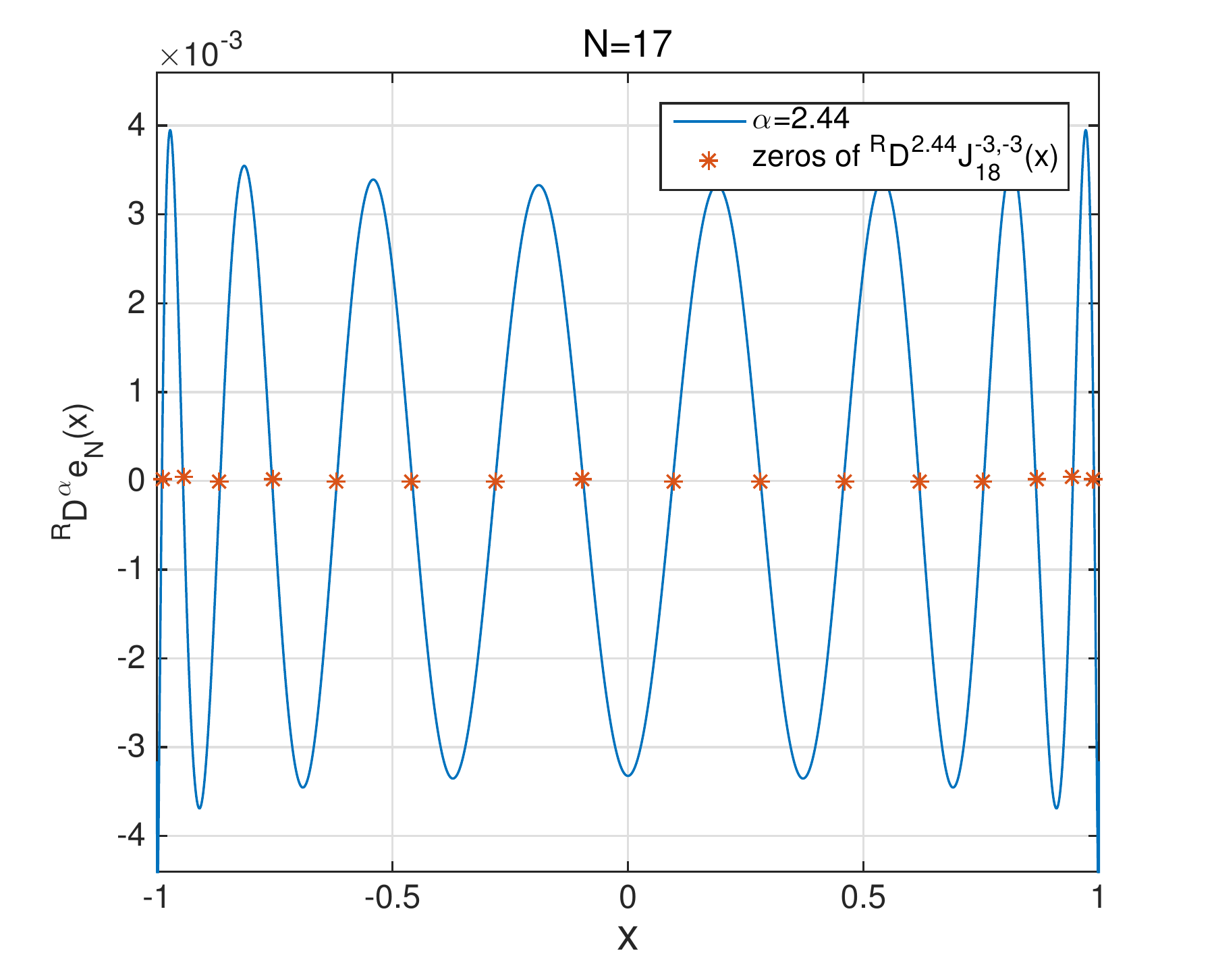}
   \caption{(Example 3) Left: Riesz fractional derivative errors ${^RD^{\alpha}}e_N(x)$ with $\alpha=1.62$. Right: the errors ${^RD^{\alpha}}e_N(x)$ with $\alpha=2.44$. }
   \label{fg65}
\end{figure}
%

In all simulations above, we can observe that the error at highlighted points is much less than the global error.
\\

\subsection{Applications}
We now apply superconvergence points above to solve integer-order/fractional boundary value problems, and investigate the effectiveness of our theoretical analysis. \\

\noindent {\bf Example 4.} Consider the following fourth-order boundary value problem:
\begin{eqnarray}
\left\{\begin{array}{ll}
u^{(4)}(x)+u(x)=f(x), \  x\in(-1,1), \\
u(-1)=u'(-1)=u(1)=u'(1)=0.
\end{array}\right.
\end{eqnarray}
To illustrate the quantitative pictures to the behavior of numerical solutions, we take the exact solution in the form of $u(x)=a_0+30e^x+a_2e^{1.5x}+a_3 e^{2x}+a_4 e^{2.5x}$, where  the coefficients $a_0,a_2,a_3,a_4$ and $f(x)$ can be calculated by using boundary conditions and the exact solution itself. Numerical solutions are obtained by the spectral-Galerkin method, see the detailed numerical scheme provided in (\cite{shen+wang+tang+spectral}, Chap. 6). Left panel of Fig.  \ref{fg67} shows the error of the first derivative $(u-U_N)'(x)$, and the first derivative superconverge points at the interior zeros of $\mathcal{J}^{-1,-1}_N(x)$ based on the analysis of Theorem \ref{thm2},. Similarly, right panel of Fig. \ref{fg67} shows the error of $(u-U_N)''(x)$,  and the second derivative superconvergens at Guass points. \\

\begin{figure}[htbp] 
   \centering
    \includegraphics[width=2.5in]{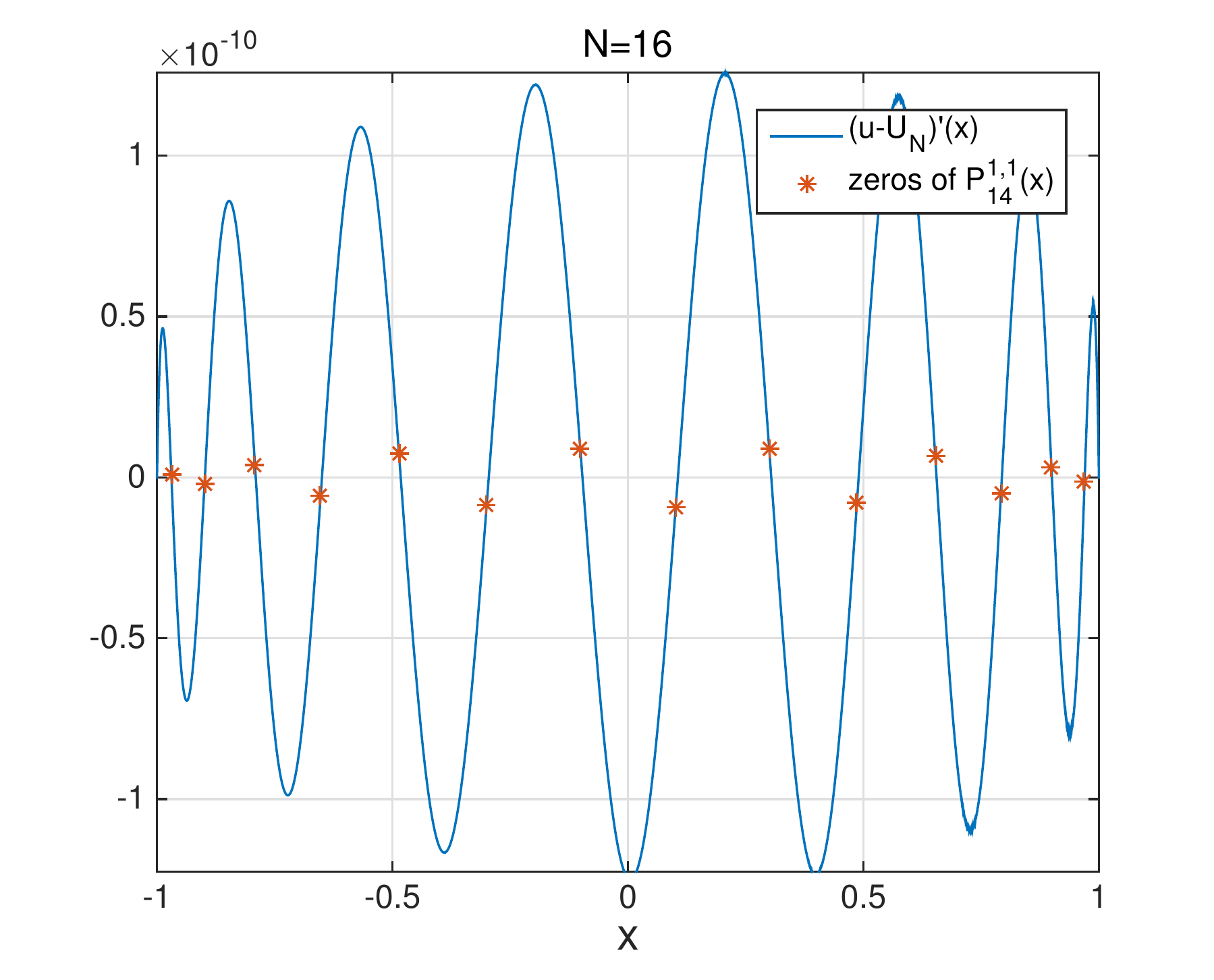}
     \includegraphics[width=2.5in]{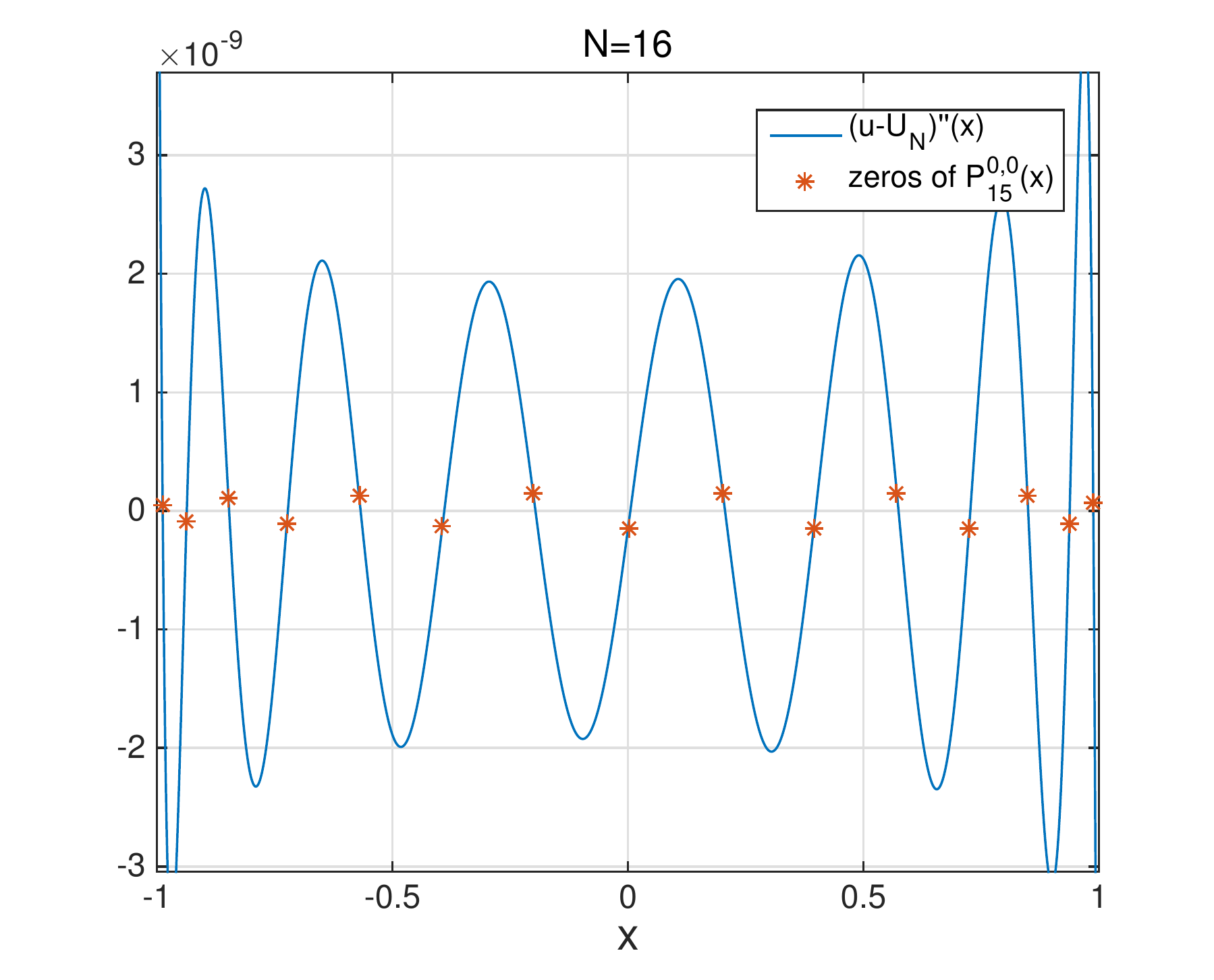}
   \caption{Left: the first derivative error: $e_N'(x)$, with $N=16$; the zeros of $P^{1,1}_{14}(x)$ are asterisked. Right: the second derivative error: $e_N''(x)$, with $N=16$; the Gauss points are asterisked.}
   \label{fg67}
\end{figure}

\noindent {\bf Example 5.} Consider the fractional boundary value problem:
\begin{eqnarray}
\left\{\begin{array}{ll}
{_{-1}D_x^{\alpha}}u(x)=f(x), \  x\in(-1,1),\quad 1<\alpha<2,\\
u(-1)=u'(-1)=0.
\end{array}\right.
\label{example5}
\end{eqnarray}

Again, to illustrate the quantitative pictures to the behavior of numerical solutions, we set the exact solution  $u(x)=\frac{1}{10}(1+x)^{6.15}$, and $f(x)$ can be calculated correspondingly. The problem \eqref{example5} is solved by spectral-collocation methods. According to the original collocation method, let $\{x_i \}_{i=1}^{10}$ be the interior zeros of $\mathcal{J}_{12}^{0,-2}(x)$. The numerical solution can be written in the form of
\begin{eqnarray}
U_N^o(x)=\sum_{j=1}^{N-1}  {U_j^o} \hat{\ell}_j(x):=\sum_{j=1}^{N-1}  {U_j^o} \cdot \ell_j(x) \frac{(1+x)^2}{(1+x_j)^2},
\end{eqnarray}
where $\ell_j \in \mathbb{P}_N[-1,1]$ is the Lagrange basis function satisfying
$${\ell}_j(x_i)=\delta_{ij}, \ i,j=1,\ldots,N-1.$$
Hence, the numerical solution is to find $ {\bf{U}}^o=(U_1^o,U_2^o, \cdots, U_{N-1}^o)^T \in \mathbb{R}^{N-1}$ such that
\begin{eqnarray}
(_{-1}D_x^{\alpha}U_N^o)(x_i)=f(x_i), \quad { i=1,\ldots,N-1.}
\label{example6231}
\end{eqnarray}
In the simulation, we set $\alpha=1.31$, $N=11$. Left panel of Fig. \ref{appl61} shows the error $(u-U_N^o)(x)$, and right panel of Fig. \ref{appl61} shows the fractional derivative of error ${_{-1}D_x^{\alpha}}(u-U_N^o)(x)$ and the corresponding interpolation points.  From Fig. \ref{appl61}, one can see that that ${_{-1}D_x^{\alpha}}(u-U_N^o)(x)$ vanishes at $\{x_i \}_{i=1}^{10}$, but the error $(u-U_N^o)(x)$ doesn't have superconvergence phenomenon, since it doesn't oscillate around 0.

Based on the theoretical analysis in this paper, we may modify the numerical scheme (\ref{example6231}) as: find $ {\bf{U}}^n=(U_1^n,U_2^n, \cdots, U_{N-1}^n)^T \in \mathbb{R}^{N-1}$ such that
\begin{eqnarray} \label{newm}
(_{-1}D_x^{\alpha}U_N^n)(\xi_i)=f(\xi_i), \quad { i=1,\ldots,N-1,}
\end{eqnarray}
where $\{\xi_i \}_{i=1}^{10}$,  the zeros of $_{-1} I_x^{2-\alpha} P^{2,0}_{10}(x)$, are the superconvergence points predicted in Theorem \ref{thm6}. As proved above, Theorem \ref{thm4} guarantees that the number of the superconvergence points $\{\xi_i \}$ is as same as of interpolation points $\{x_i \}$, so that the linear system has unique solution.

As what shown in Fig \ref{appl62}, one can observe that ${_{-1}D_x^{\alpha}}(u-U_N^n)(x)$ vanishes at $\{\xi_i \}_{i=1}^{10}$. More importantly, $U_N^n(x)$ superconverges to $u(x)$ at the interpolation points $\{x_i \}_{i=1}^{10}$. This leads that numerical solution of our new scheme \eqref{newm} is much more accurate than that of the traditional scheme \eqref{example6231}. To illustrate the accuracy of numerical schemes, for any given degree of freedom $N$, we define the maximum norms of numerical solutions respectively for schemes \eqref{example6231} and \eqref{newm} as
\begin{eqnarray*}
E_o=\max_{1 \leqslant i \leqslant N-1} \{ |u(x_i)-U_i^o |\} , \ \hbox{and} \ E_n=\max_{1 \leqslant i \leqslant N-1} \{ |u(x_i)-U_i^n| \}.
\end{eqnarray*}
The maximal error norms $E_o$ and $E_n$ are list Table (\ref{table2}). One can see that for each $N$, $E_n$ is significantly smaller than $E_o$ at least by $10^{-2}$.

\begin{figure}[htbp] 
   \centering
    \includegraphics[width=5in]{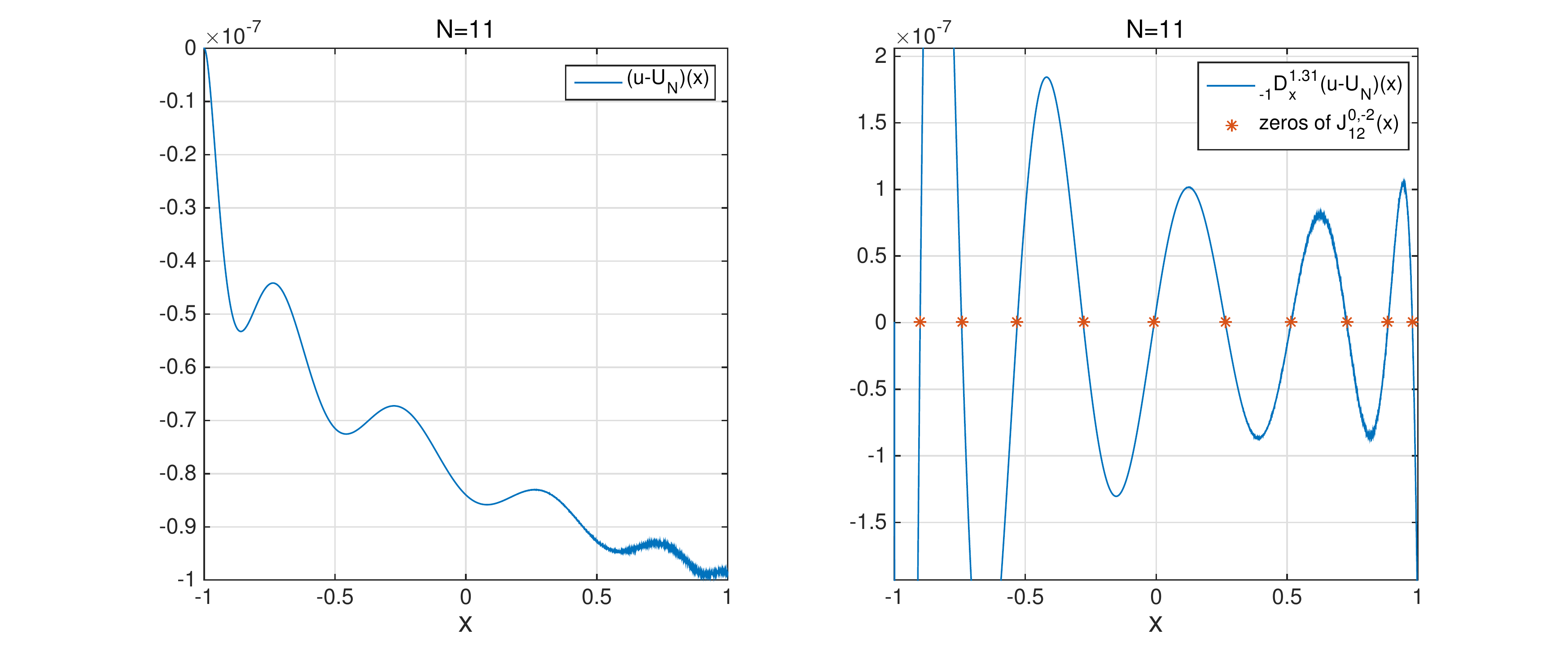}
   \caption{Right: Fractional derivative errors: ${_{-1}D_x^{1.31}}e_N(x)$, with $N=11$, evaluating points $\{x_i \}_{i=1}^{10}$ (asterisked). Left: the error $e_N(x)$, the curve doesn't oscillate around 0.}
   \label{appl61}
\end{figure}

\begin{figure}[htbp] 
   \centering
    \includegraphics[width=5in]{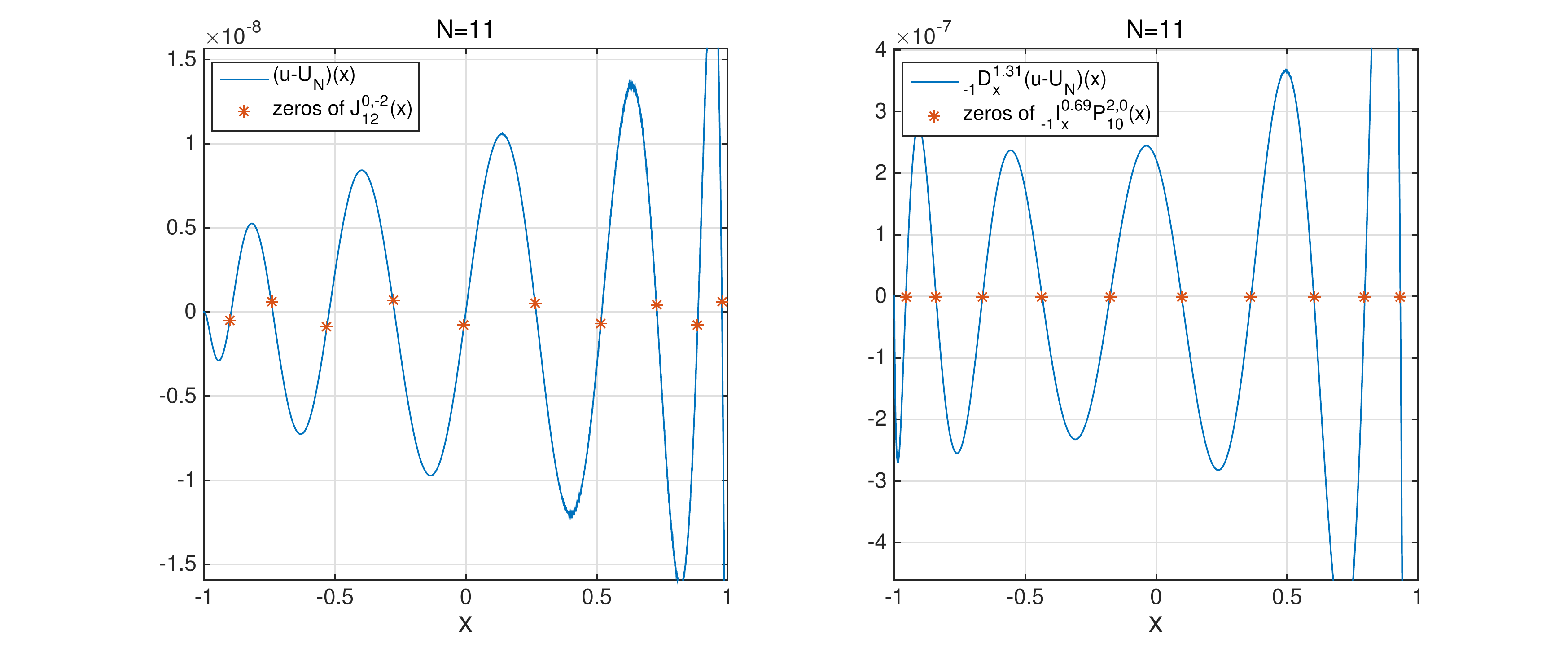}
   \caption{Right: Fractional derivative errors: ${_{-1}D_x^{1.31}}e_N(x)$, with $N=11$, evaluating points $\{\xi_i \}_{i=1}^{10}$ (asterisked). Left: the error $e_N(x)$, the curve oscillates around 0, and the error at the asterisked points (interpolation points) is much less than the global error.}
   \label{appl62}
\end{figure}

\begin{table}[htbp]
	\centering
	\begin{tabular}{@{} c|c|c|c|c|c|c @{}}
	\multicolumn{7}{c}{The comparison of maximal errors of the two collocation methods} \\
	\hline
	N & 6 & 7 & 8 & 9 & 10 & 11\\
	\hline
	$E_o$ & 6.45E-04 & 4.52E-05 & 6.46E-06 & 1.29E-06 & 3.32E-07 & 9.84E-08 \\
	\hline
	$E_n$ & 3.42E-06 & 3.52E-07 & 5.54E-08 & 1.15E-08 & 2.90E-09 & 8.55E-10 \\
	\hline
	\end{tabular}
	\caption{}
	\label{table2}
\end{table}

\section{Conclusions}
Generally, the three kinds of Hermite interpolations are comparable to Lagrange-type spectral interpolations. It is proved that for any fixed $k$, the integer-order derivative of the error decays exponentially with respect to the degree of freedom.
The superconvergence points of each Hermite interpolation are also identified. Moreover, the convergence rates at the superconvergence points of one-point and two-point Hermite interpolations are proved to be $O(N^{-2})$ and $O(N^{-\frac{3}{2}})$ higher than the global rate respectively.

The superconvergence points of the Riemann-Liouville derivative can be efficiently and accurately calculated by an eigenvalue method. Numerical simulations show that the gain of convergence rate is at least $O(N^{-1})$. It is a future work to validate the gain of convergence rate for fractional derivatives theoretically. Another interesting observation is that the number of interpolation points for the Riemann-Liouville fractional derivative  is equal to the number of superconvergence points, which differs from the case of integer-order derivative. Therefore, we can use the
superconvergence points found in this paper to numerically solve FDEs.  By modifying the collocation scheme based on the analysis in Theorem \ref{thm6}, the errors at interpolation points are much smaller than the global errors. This phenomenon is not observed in the numerical solution of traditional collocation method. So comparing with the traditional collocation method,
our new scheme produces more accurate numerical solutions.

As far as we know, this is the first attempt to study the superconvergence points for Riemann-Liouville and Riesz fractional derivatives by using Hermit interpolation. As shown in \cite{Mao+2015+ANM,Li+2012+FCAA,Li+2009+SINUM,Xu+2014+JCP,Zeng+2014+SINUM,Zheng+2015+SISC,Can+2016+SINUM,Can+2018+JSC,Zeng+Gg+2015+SISC,Chen+2014+mathcom}, the spectral method has been successfully applied to solve FDEs. The study in this paper will be useful to numerally solve FDEs, especially, using spectral collocation methods. Hence, it will be interesting to develop new spectral collocation methods based on the superconvergence points for high-order FDEs, such as super-diffusion models in \cite{MMMeer+2007,CLi+YQChen+2011}, or FDEs with Riesz, Caputo, or two-sided fractional derivatives in \cite{Zeng+2014+SINUM, Deng+2012+AM,Shen+2014+IMA, XuanSun+2014+SISC,Martin+2014+finitedf, Martin+2015+colloc, Mao+2018+SINUM}.


\begin{thebibliography}{10}


\bibitem{Askey+1975}
{\sc R. Askey}, {\em Orthogonal Polynomials and Special Functions}, SIAM, Philadelphia, 1975.

\bibitem{Bernstein+1912}
{\sc S. N. Bernstein}, {\em Sur l'ordre de la meilleure approximation des foncions continues par des polynomes de degr\'e donn\'e}, M\'em. Publ. Class Sci. Acad. Belgique (2), 4 (1912), pp. 1®C103.


\bibitem{Bu+2014+JCP}
{\sc W. Bu, Y. Tang, J. Yang}, {\em Galerkin finite element method for two-dimensional Riesz space fractional diffusion equations}, J. Comput. Phys. 276(2014), pp. 26-38.

\bibitem{Chen+2014+mathcom}
{\sc S. Chen, J. Shen, L.-L. Wang}, {\em Generalized Jacobi functions and their applications to fractional differential equations}, Math. Comp. 85(2016), pp. 1603-1638.

\bibitem{Interp+approx}
{\sc P. J. Davis}, {\em Interpolation and approximation}, Dover, New York, 1975.

\bibitem{Deng+arxiv}
{\sc B. Deng, Z. Zhang, X. Zhao}, {\em Superconvergence points for the spectral interpolation of Riesz fractional derviatives}, arXiv:1709.10223.


\bibitem{Deng+2012+AM}
{\sc K. Deng, W. Deng}, {\em Finite difference/predictor-corrector approximations for the space and time fractional Fokker-Planck equation}, Appl. Math. Lett., 25(11)(2012) pp. 1815-1821.

\bibitem{Guo+Shen+Wang+GJPF}
{\sc B.-y. Guo, J. Shen, L.-L. Wang}, {\em  Generalized Jacobi Polynomials/Functions And their applications}, Appl. Numer. Math. 59(2009), No.5, 1011-1028.


\bibitem{Can+2016+SINUM}
{\sc C. Huang, Y. Jiao, L.-L. Wang, Z. Zhang}, {\em Optimal fractional integration preconditioning and error analysis of fractional collocation method using nodal Generalized Jacobi Functions}, SIAM J. Numer. Anal.,  54(2016), pp. 3357-3387.

\bibitem{Can+2018+JSC}
{\sc C. Huang, Z. Zhang, Q. Song}, {\em Spectral methods for substantial fractional differential equations}, J. Sci. Comput., 74(2018), pp. 1554-1574.


\bibitem{Martin+2015+colloc}
{\sc N. Kopteva, M. Stynes}, {\em An efficient collocation method for a Caputo two-point boundary value problem}, BIT Numer. Math. 55(4), 1105-1123.


\bibitem{Lei+2013+JCP}
{\sc S. Lei, H. Sun}, {\em A circulant preconditioner for fractional diffusion equations}, J. Comput. Phys. 242(2013), pp. 715-725.

\bibitem{CLi+YQChen+2011}
{\sc C. Li, Z. Zhao, YQ. Chen}, {\em Numerical approximation of nonlinear fractional differntial equations with subdiffusion and superdiffusion}, Comput. Math. Appl., 62(2011), pp. 855-875.


\bibitem{Li+2012+FCAA}
{\sc C. P. Li, F. H. Zeng, and F.  Liu}, {\em Spectral approximations to the fractional integral and derivative}, Frac.
Calc.  Appl. Anal. 15 (2012) 383-406.


\bibitem{Li+2009+SINUM}
{\sc X. Li and C. J. Xu},{\em  A space-time spectral method for the time fractional diffusion equation}, SIAM J. Numer. Anal. 47 (2009) 2108-2131.

\bibitem{Lin+2006+Bj}
{\sc Q. Lin and J. Lin}, {\em Finite Element Methods: Accuracy and Improvement}, Math. Monogr. Ser.
1, Science Press, Beijing, 2006.


\bibitem{Mao+2015+ANM}
{\sc Z. Mao, S. Chen and J. Shen}, {\em Efficient and accurate spectral method using generalized Jacobi functions for solving Riesz fractional differential equations}, Applied Numerical Mathematics, 106(2016), pp. 165-181.


\bibitem{Mao+2018+SINUM}
{\sc Z. Mao, G. Karniadakis}, {\em A spectral method (of exponential convergence) for singular solutions of the diffusion equation with general two-sided fractional derivative}, SIAM J. Numer. Anal.,  56(2018), pp. 24-49.


\bibitem{HPang+2012+JCP}
{\sc H. Pang, H. Sun}, {\em Multigrid method for fractional diffusion equations}, J. Comput. Phys. 231(2012), pp. 693-703.

\bibitem{Roop+2006+AM}
{\sc J. P. Roop}, {\em Computational aspects of FEM approximation of fractional advection dispersion equations on bounded domains in $\mathbb{R}^2$}, J. Comput. Appl. Math, 193(1)(2006), pp. 243-268.


\bibitem{Shen+2014+IMA}
{\sc S. Shen, F. Liu, V. Anh, I. Terner, J. Chen}, {\em A novel numerical approximation for the space fractional advection-dispersion equation}, IMA J. Appl. Math. 79(3)(2014), pp. 421-444.


\bibitem{Shen+2015+JMS}
{\sc J. Shen, C. Sheng, Z. Wang}, {\em Generalized Jacobi spectral-Galerkin method for nonlinear Volterra integral equations with weakly singular kernels}, J. Math. Study, 48(2015), pp. 315-329.


\bibitem{shen+wang+tang+spectral}
{\sc J. Shen, T. Tang, L-L. Wang}, {\em Spectral Methods: Algorithms, Analysis and Applications}, Springer Series in Computational Mathematics, Vol. 41, Springer, 2011.



\bibitem{Martin+2014+finitedf}
{\sc M. Stynes, J. L. Gracia}, {\em A finite difference method for a two-point boundary value problem with a Caputo fractional derivative}, IMA J. Numer. Anal. 35(2), 698-721.


\bibitem{MMMeer+2007}
{\sc C. Tadjeran, M. M. Meerschaert}, {\em A second-order accurate numerical method for the two-dimensional fractional diffusion equation}, J. Comput. Phys. 220(2007), pp. 813-823.


\bibitem{LBW+1995+note}
{\sc L. B. Wahlbin}, {\em Superconvergence in Galerkin Finite Element Methods}, Lecture Notes in Math.
1605, Springer-Verlag, Berlin, 1995.


\bibitem{LWang+2014+JSC}
{\sc L.-L. Wang, X. D. Zhao and Z. Zhang}, {\em Superconvergence of Jacobi-Gauss-type spectral interpolation}, J. Sci. Comput., 59 (2014), pp. 667-687.

\bibitem{Xie+2012+MACOM}
{\sc Z. Xie, L. Wang and X. Zhao}, {\em On exponential convergence of Gegenbauer interpolation and spectral differentiation}, Math. Comp., 82(2012) pp. 1017-1036.

\bibitem{Xu+2014+JCP}
{\sc Q. Xu, J. S. Hesthaven}, {\em Stable multi-domain spectral penalty methods for
fractional partial differential equations}, J. Comput. Phys. 257 (2014) 241-258.

\bibitem{Zeng+Li+arxiv}
{\sc F. Zeng, C. Li}, {\em Fractional differential matrices with applications}, arXiv:1404.4429.


\bibitem{Zeng+2014+SINUM}
{\sc F. Zeng, F. Liu, C. P. Li, K. Burrage, I. Turner, and V. Anh}, {\em Crank-
Nicolson ADI spectral method for the 2-D Riesz space fractional nonlinear reaction-diffusion equation}, SIAM J. Numer. Anal., 52 (2014), pp. 2599-2622.


\bibitem{Zeng+Gg+2015+SISC}
{\sc F. Zeng, Z. Zhang, G. Karniadakis}, {\em A generalized spectral collocation method with tunable accuracy for variable-order fractional differential equations}, SIAM J. Sci. Comput., 37(2015), pp. A2710-A2732.

\bibitem{Zhang+2008+JSC}
{\sc Z. Zhang}, {\em  Superconvergence of a Chebyshev spectral collocation method}, J. Sci. Comput. 34 (2008)  237-246.

\bibitem{Zhang+2012+SINUM}
{\sc Z. Zhang}, {\em  Superconvergence points of polynomial spectral interpolation}, SIAM J. Numer. Anal. 50 (2012), 2966-2985.


\bibitem{XuanSun+2014+SISC}
{\sc X. Zhao, Z. Sun, Z. Hao}, {\em A fourth-order compact ADI scheme for two-dimensional nonlinear space fractional Schrodinger equation}, SIAM J. Sci. Comput. 36(2014), pp. 2865-2886.


\bibitem{Zhao+2016+SISC}
{\sc  X. Zhao, Z. Zhang}, {\em Superconvergence points of fractional spectral interpolation,} SIAM J. Sci. Comput., 38 (2016), pp. A598-A613.

\bibitem{Zheng+2015+SISC}
{\sc M. Zheng, F. Liu, I. Turner and V. Anh}, {\em A novel high order space-time spectral method for the time-fractional Fokker-Planck equation}, SIAM J. Sci. Comput, 37 (2015), pp. A701-A724.







\end{thebibliography}
\end{document}